\documentclass[11pt,a4paper]{article}
\usepackage[T1]{fontenc}
\usepackage{lmodern}
\usepackage{amsmath,amssymb,amsthm,mathtools}
\usepackage{microtype}
\usepackage[numbers,sort&compress]{natbib}
\usepackage[a4paper,margin=22mm]{geometry}
\usepackage[hidelinks]{hyperref}
\usepackage{needspace}
\hypersetup
{
	colorlinks=true,
	linkcolor=blue,
	filecolor=blue,
	urlcolor=blue,
	citecolor=blue,
}
\usepackage{setspace}
\newtheorem{theorem}{Theorem}[section]
\newtheorem{lemma}[theorem]{Lemma}

\theoremstyle{definition}
\newtheorem{claim}{Claim}
\newtheorem{conjecture}{Conjecture}

\theoremstyle{plain}

\newcommand{\odd}{\operatorname{odd}}
\newcommand{\one}{\mathbf{1}}
\newcommand{\dd}{\mathsf{D}}
\newcommand{\vol}{\operatorname{vol}}

\title{Optimal path decompositions of sparse random graphs}
\author{Jifu Lin\thanks{Email: \href{mailto:jifulin01@163.com}{jifulin01@163.com}.}
\and Heng Yang\thanks{Corresponding author. Email:
\href{mailto:younghmath@163.com}{younghmath@163.com}.}}
\date{}

\begin{document}
\maketitle
\vspace{-1.5em}
\begin{center}\small
Department of Mathematics, East China Normal University, Shanghai 200241, China
\end{center}

\begin{abstract}
Let $\odd(G)$ be the number of odd-degree vertices of a graph $G$.
We prove that, if $p=o(1)$ and $np/\log n\to\infty$, then with
probability tending to one the random graph $G(n,p)$ has an edge
decomposition into exactly $\odd(G)/2$ paths. This confirms a conjecture of Glock, K\"uhn and Osthus from 2016. The proof combines a local pairing argument based on the lopsided Lovász local lemma
with cycle decompositions of random graphs.
\end{abstract}

\textbf{Keywords:} Random graphs; path decompositions; Lov\'asz local lemma

\textbf{MSC 2020:} 05C38; 05C80.

\section{Introduction}

Path and cycle decompositions are among the most classical and fundamental topics in structural graph theory, closely tied to several long-standing conjectures.

A path decomposition (resp. cycle decomposition) of a graph $G$ is a family of nonempty paths (resp. cycles) whose edge sets partition $E(G)$.
Let $\odd(G)$ denote the number of vertices of odd degree in $G$. Every path decomposition of $G$ contains at least $\odd(G)/2$ paths, since every odd-degree vertex must be an endpoint of at least one path, and each path has exactly two endpoints. Thus, it is natural to ask when this trivial lower bound can be attained.

Gallai conjectured that every connected graph on $n$ vertices admits a path decomposition into at most $\lceil n/2\rceil$ paths. 
Lov\'asz~\cite{Lovasz68} proved several foundational decomposition results, including an upper bound of $n-1$ paths for arbitrary graphs. 
Subsequent work improved the general upper bound to $\lfloor 2n/3\rfloor$ paths~\cite{DK00,Yan98}. For dense graphs with fixed positive linear minimum degree, Gir\~ao, Granet, K\"uhn and Osthus~\cite{GGKO21} obtained a path decomposition upper bound of $n/2+o(n)$, which is asymptotically near the bound in Gallai's conjecture. Gallai's conjecture remains open in general, which has motivated extensive study of optimal path decompositions for well-studied graph classes.

The Erd\H{o}s--Gallai conjecture~\cite{EG-conj,EGP66} states that the edge set of any $n$-vertex graph can be partitioned into $O(n)$ cycles and edges. 
Conlon, Fox and Sudakov~\cite{CFS14} proved an $O(n\log\log n)$ upper bound for general graphs, and established the conjecture for random graphs and for graphs with linear minimum degree. A resolution of the Erdős–Gallai conjecture was recently given by Kim~\cite{Kim26}; OpenAI~\cite{OpenAI26} obtained an independent proof.

For random graphs, Kor\'andi, Krivelevich and Sudakov~\cite{KKS15} carried out a more systematic study of the problem: for a large range of $p$, $G(n,p)$ with high probability can be decomposed into $n/4 + np/2 + o(n)$ cycles and edges, which is asymptotically best possible. Glock, K\"uhn and Osthus~\cite[Theorem~1.2(ii)]{GKO16} proved that for every fixed $p\in(0,1)$, $G(n,p)$ with high probability admits an optimal path decomposition into exactly $\max\{\odd(G)/2,\lceil\Delta(G)/2\rceil\}$
paths, matching the natural lower bound exactly. They also conjectured that the
lower bound $\odd(G)/2$ is attainable in the sparse range:
\begin{conjecture}[Glock, K\"uhn and Osthus~\cite{GKO16}]\label{conj:GKO}
	Suppose $p=o(1)$ and
	$np/\log n\to\infty$. With probability tending to one as
	$n\to\infty$, $G\sim G(n,p)$ can be decomposed into $\odd(G)/2$ paths.
\end{conjecture}

In this paper, we prove Conjecture~\ref{conj:GKO}.

\begin{theorem}\label{thm:main}
Suppose	 $p=o(1)$ and $np/\log n\to\infty$.
With probability tending to one as $n\to\infty$, the random graph
$G\sim G(n,p)$ has a decomposition into exactly $\odd(G)/2$ paths.
\end{theorem}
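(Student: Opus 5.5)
The plan is to reduce the problem to a cycle decomposition statement via a pairing of the odd-degree vertices. Let $O$ be the set of odd-degree vertices of $G$, so $|O|=\odd(G)$ is even. We will find a perfect matching $M$ on $O$ made of \emph{new} pairs $\{x_i,y_i\}$ with $x_iy_i\notin E(G)$, and with well-spread structure. Consider the auxiliary graph $G^+=G\cup M$. It is Eulerian, i.e.\ every vertex has even degree. Suppose $G^+$ has a decomposition into cycles in which each cycle contains at most one edge of $M$. Deleting the $M$-edge from each cycle that contains one turns that cycle into a path from $x_i$ to $y_i$. This gives $|M|=\odd(G)/2$ paths. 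The remaining cycles must then be absorbed into these paths.

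To handle those remaining cycles, we will instead aim directly for the strong form of this decomposition. We want a decomposition of $G^+$ into exactly $|M|$ closed trails, each containing exactly one $M$-edge. A closed trail through $x_iy_i$ with that edge removed is an $x_i$--$y_i$ trail, not necessarily a path. So we want a decomposition of $G^+$ into exactly $|M|$ cycles, each through exactly one edge of $M$. Equivalently, we want a path decomposition of $G$ into $|M|$ paths whose endpoints are prescribed by $M$.

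\textbf{Step 1 (pairing).} In $G(n,p)$ with $np\gg\log n$, we have $|O|=(1/2+o(1))n$ with high probability, and degrees are concentrated. We choose $M$ at random as a uniform perfect matching of $O$. Using the lopsided Lov\'asz local lemma over bad events, we show that some $M$ satisfies the required properties simultaneously. The bad events are: a pair being adjacent in $G$; two pairs being too close (sharing many common neighbours in a configuration that would obstruct later steps); and, for each vertex $v$, too many pairs having both endpoints in $N(v)$. The dependency structure of a random perfect matching is exactly the setting of the lopsided local lemma of Lu--Sz\'ekely type, with matching events as canonical events. This yields, for instance, that no pair lies in a common small neighbourhood configuration. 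It should also give that $G^+$ stays pseudorandom, with the edges of $M$ being locally sparse.

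\textbf{Step 2 (absorbing structure).} For each pair $x_iy_i$ we reserve a short path $P_i$ in $G$ from $x_i$ to $y_i$ of length about $\log n/\log(np)+O(1)$. These paths are taken edge-disjoint and vertex-spread, so that every vertex lies in few of them. They are obtained greedily or by the nibble, using expansion of $G$ and Step 1's local conditions. Let $H=G\setminus\bigcup_i E(P_i)$. Then $H$ has all degrees even and inherits the pseudorandomness of $G$. The goal is to decompose $H$ into cycles $C_1,\dots,C_t$ and then merge each cycle into some $P_i$ sharing a vertex with it. Merging a path and a cycle at a common vertex gives a trail, not a path, so we instead use the cycle to reroute. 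If $C$ meets $P_i$ in two vertices $u,w$, we replace the $u$--$w$ segment of $P_i$ by a $u$--$w$ route through $C$ together with the old segment, arranged as a single path. This requires the segments to be vertex-disjoint from $C$ otherwise.

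The cleaner way to organise this is as follows. We take a cycle decomposition of $H$ into few long cycles, namely $O(n)$ cycles, using known results for random graphs (Kor\'andi--Krivelevich--Sudakov, or Conlon--Fox--Sudakov). Each cycle is then absorbed into a distinct reserved path by a local switch, using the spread of the $P_i$ to guarantee a valid absorbing gadget.

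\textbf{Main obstacle.} The hard step is absorbing the cycles so that the result is a set of genuine paths with exactly the prescribed count. The number of cycles in a decomposition of the even graph $H$ is at least about $n/4$ and must be handled by $|M|\approx n/4$ paths. The budget is therefore tight, and each path must swallow roughly one cycle while every merge remains a path. I would first try to build the cycle decomposition of $H$ so that every cycle passes through two consecutive vertices of a distinct $P_i$. In that case $P_i$ and its cycle can be recombined into a single path, obtained from the cycle minus one edge spliced into $P_i$ along that edge. This requires prescribing one edge per cycle, an absorbing edge $e_i\in P_i$, within the cycle decomposition, which is where the random-graph cycle decomposition results would need strengthening.
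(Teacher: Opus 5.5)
There is a genuine gap, and it sits exactly at the step you call the main obstacle. The mechanism you propose for that step cannot work.

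\textbf{Why one path cannot absorb one cycle.} A path $P_i$ and an edge-disjoint cycle $C$ can never be merged into a single path. If they share a vertex $v$, then $v$ has degree at least $3$ in $P_i\cup C$; if they share none, $P_i\cup C$ is disconnected. Both of your concrete versions fail for this reason.
\begin{itemize}
\item Replacing the $u$--$w$ segment of $P_i$ by ``a route through $C$ together with the old segment'' means traversing three internally disjoint $u$--$w$ paths in one trail, so it revisits $u$ and $w$.
\item ``Splicing the cycle minus one edge into $P_i$ along that edge'' needs an edge lying both in $C\subseteq H$ and in $P_i$. That is impossible, since $E(P_i)$ was deleted to form $H$. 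If you instead take the cycle through an edge $e_i$ of $P_i$, then $e_i$ is either used twice or left uncovered.
\end{itemize}
So every cycle has to be shared among at least two paths. For instance, two paths meeting $C$ in single distinct vertices, and otherwise suitably disjoint, can be rerouted into two paths with swapped endpoints. You would need such configurations for all cycles simultaneously, and neither your Step~1 nor a greedy or nibble construction of the $P_i$ provides them. Fixing the endpoint pairs through $M$ in advance also makes your target strictly stronger than the theorem.

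\textbf{The budget is not tight.} The $n/4$ in the Kor\'andi--Krivelevich--Sudakov count is the number of single edges forced by the odd vertices, not a lower bound on cycles. For an even graph the trivial lower bound is $e(H)/n\approx np/2=o(n)$. Lemma~\ref{lem:core} shows that $o(n)$ cycles suffice for the even part of a random graph of density $o(1)$, at least for $q\ge\log^6 n/n$. So there is a lot of slack, and the paper's argument depends on it.

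\textbf{What the paper does instead.} Its engine is the Key Lemma (Lemma~\ref{lem:pairing}).
\begin{itemize}
\item Independently at each vertex, choose a uniform pairing of its incidences, adding one dummy object at odd vertices. This automatically splits the edges into exactly $|D(H)|/2$ open trails plus some closed trails.
\item The lopsided local lemma, applied to the cycle events $\mathcal E_{C,z}$, rules out closed trails and repeated vertices. A switching argument gives the conditional condition, and nonbacktracking-walk and spectral bounds give the weights.
\item This works in any nearly $r$-regular graph with $\log n=o(r)$, $r^3=o(n)$, $\Delta_2=o(r)$ and $\lambda=o(1)$. No pairing of the odd vertices is ever prescribed.
\end{itemize}
For $np\le n^{1/4}$ the Key Lemma applies to $G$ directly. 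For larger $p$, the paper proceeds as follows.
\begin{itemize}
\item Split $G=G_1\dot\cup G_2$ with $G_2\sim G(n,n^{1/8}/n)$, and take the $t=o(n)$ cycles of $G_1$ given by Lemma~\ref{lem:core}.
\item Turn each cycle into two paths by adding four edges of $G_2$ to distinct fresh odd-degree vertices, or two disjoint chords for long cycles. This uses Lemmas~\ref{lem:attachments} and~\ref{lem:two-paths}, Hall's theorem and the spectral bound on $G_2$.
\item The leftover graph $(G_2-H_1)\dot\cup H_0$ still satisfies the Key Lemma's hypotheses. The total number of paths is $2t+(|D|-4t)/2=\odd(G)/2$.
\end{itemize}
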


Our result achieves the trivial lower bound $\odd(G)/2$ and thus gives an optimal path decomposition for sparse random graphs in this range.

The proof combines two ingredients. A key lemma gives an optimal
path decomposition for sparse, nearly regular graphs with suitable
common-neighbor and spectral bounds. For graphs of larger average degree, we rely on the cycle
decompositions of Kor\'andi, Krivelevich and Sudakov~\cite{KKS15}. Using a small set of reserve edges, we replace each cycle by two paths whose endpoints are odd-degree vertices and are all distinct
across the entire construction. We then apply our key lemma to the edges that remain unused.

Section~\ref{sec:prelim} collects notation and auxiliary results.
Section~\ref{sec:pairing} proves the key lemma.
Section~\ref{sec:main} gives the constructions that replace cycles
by paths and proves Theorem~\ref{thm:main}.

\section{Preliminaries}\label{sec:prelim}
All graphs are finite, simple and undirected. We use the standard terminology in~\cite[Chapter~1]{Bondy2008}. A \emph{walk} is a sequence $v_0e_1v_1\cdots e_kv_k$ in which $e_i=v_{i-1}v_i$ for
$1\le i\le k$. A \emph{trail} is a walk with distinct edges, and a \emph{path}
is a walk with distinct vertices. A \emph{cycle} is a closed walk of length
at least three whose vertices are distinct apart from
$v_0=v_k$. The length of a walk is its number of edges. All paths
in a decomposition are nonempty.

For a graph $H$, we write $V(H)$ and $E(H)$ for its vertex and edge sets, respectively. For a vertex $v \in V(H)$, we denote by $N_H(v)$ its neighborhood and by $d_H(v) = |N_H(v)|$ its degree. For $U\subseteq V(H)$, let $d_H(v,U)=|N_H(v)\cap U|$,
and let $e_H(U)$ count the edges with both ends in $U$. The minimum
and maximum degrees are $\delta(H)$ and $\Delta(H)$. Define
$D(H)=\{v:d_H(v)\text{ is odd}\}$ and $\odd(H)=|D(H)|$. Also, $\Delta_2(H)=\max_{u\ne v}|N_H(u)\cap N_H(v)|$ is the largest
number of common neighbors of two distinct vertices. A graph is
\emph{even} if every degree is even; it need not be connected.
Every even graph has a cycle decomposition~\cite[Theorem~2.7]{Bondy2008}.
For graphs $H_1,H_2$ on the same vertex set $V$, we use
$V(H_1\cup H_2)=V$ and
$E(H_1\cup H_2)=E(H_1)\cup E(H_2)$.
We write $H_1\dot\cup H_2$ when
$E(H_1)\cap E(H_2)=\varnothing$. For a subgraph $F$ of $H$,
$V(H-F)=V(H)$ and $E(H-F)=E(H)\setminus E(F)$. We use standard asymptotic notation $O$, $\Omega$, $o$ and $\gg$. A property holds \emph{with high probability} (whp) if its probability tends to $1$ as $n\to\infty$.

In $G(n,p)$, each unordered pair of distinct vertices is an edge
independently with probability $p$. We use the following Chernoff
bounds~\cite[Theorems~A.1.12--A.1.13]{AS16}. If $Y$ is a sum of independent
zero-one random variables with mean $\mu>0$, then
\[
 \Pr(|Y-\mu|\ge\eta\mu)\le2e^{-\eta^2\mu/3}
 \quad(0<\eta\le1),\qquad
 \Pr(Y\ge t)\le(e\mu/t)^t\quad(t\ge\mu).
\]
We also use the one-sided lower-tail bound
$\Pr(Y\le(1-\eta)\mu)\le e^{-\eta^2\mu/2}$ for $0<\eta<1$.
For any finite collection of events $\mathcal A_i$, we use
$\Pr(\bigcup_i\mathcal A_i)\le\sum_i\Pr(\mathcal A_i)$,
which requires no independence. For
$Y\sim\operatorname{Bin}(m,s)$, we also use
$\Pr(Y\text{ is odd})=(1-(1-2s)^m)/2$;
see~\cite[Section~2]{KKS15}.

We need a few matrix estimates to count walks. The adjacency
matrix $A_H$ has entry one at $(u,v)$ when $uv\in E(H)$ and zero
otherwise. If $H$ has no isolated vertices, let $\dd_H$ be its
diagonal matrix of degrees and define
$P_H=\dd_H^{-1}A_H$ and $S_H=\dd_H^{-1/2}A_H\dd_H^{-1/2}$.
Thus $(P_H^k)_{vu}$ is the probability that a walk starting at $v$
and choosing a neighbor uniformly at each step is at $u$ after
$k$ steps. The symmetric matrix $S_H$ has eigenvalue one with
eigenvector $(\sqrt{d_H(v)}:v\in V(H))$. Let $\lambda(H)$ be the
largest absolute value of its remaining eigenvalues, after removing
one copy of the eigenvalue one. In particular, $\lambda(H)<1$
implies that $H$ is connected.

For a real vector $x$, let $\|x\|_2 = (\sum_i x_i^2)^{1/2}$. For a real matrix $M$, let $\|M\|$ be its spectral norm and $\|M\|_\infty = \max_i \sum_j |M_{ij}|$ its maximum row sum norm. For symmetric $M$, $\|M\|$ equals the largest absolute eigenvalue.
The vector
$\one$ has all entries one, and $\one_U$ has entry one on $U$ and
zero elsewhere. For any graph $F$, $\|A_F\| \le \Delta(F)$.

\begin{lemma}\label{lem:normalize}
If $d_H(v)=(1+o(1))r$ uniformly over $v$ and
$\|A_H-(r/n)\one\one^{\mathsf T}\|=o(r)$, then
$\lambda(H)=o(1)$.
\end{lemma}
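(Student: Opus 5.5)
The plan is to compare $S_H$ with the rank-one matrix $(1/r)A_H$ shifted by its expected part, and then apply Weyl's inequality. Write $\dd=\dd_H$. Since $d_H(v)=(1+o(1))r$ uniformly, we have $\dd^{-1/2}=r^{-1/2}(I+E)$ with $E$ diagonal and $\|E\|=o(1)$. Set $J=(r/n)\one\one^{\mathsf T}$ and $M=A_H-J$, so that $\|M\|=o(r)$ by hypothesis. Also $\|J\|=r$, which gives $\|A_H\|\le r+o(r)=O(r)$.

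First, I will show that $S_H$ is close to $r^{-1}A_H$ in spectral norm. Expanding,
\[
S_H-r^{-1}A_H=r^{-1}\bigl(EA_H+A_HE+EA_HE\bigr),
\]
whose norm is at most $r^{-1}\|A_H\|(2\|E\|+\|E\|^2)=o(1)$. Next, $r^{-1}A_H=r^{-1}J+r^{-1}M$, and here $\|r^{-1}M\|=o(1)$. Combining the two estimates,
\[
S_H=\tfrac1n\one\one^{\mathsf T}+N,\qquad \|N\|=o(1).
\]

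Now I apply Weyl's inequality for symmetric matrices. The matrix $\frac1n\one\one^{\mathsf T}$ has eigenvalues $1,0,\dots,0$. Hence the eigenvalues $\mu_1\ge\dots\ge\mu_n$ of $S_H$ satisfy $|\mu_1-1|\le\|N\|$ and $|\mu_i|\le\|N\|$ for $i\ge2$.

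It remains to identify $\mu_1$ with the eigenvalue one whose eigenvector is $(\sqrt{d_H(v)})_v$. This holds because $\mu_1$ is the only eigenvalue within $o(1)$ of one, and every other eigenvalue is $o(1)$. Removing one copy of the eigenvalue one therefore removes $\mu_1$, and the remaining eigenvalues have absolute value at most $\|N\|=o(1)$. This gives $\lambda(H)=o(1)$.

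The only mildly delicate points are the bound $\|A_H\|=O(r)$, which follows from the triangle inequality as above, and the identification of the removed eigenvalue. Both are routine, and I expect no real obstacle.
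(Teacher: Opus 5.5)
Your proof is correct and uses essentially the same rank-one-plus-small-perturbation idea as the paper, which writes $S_H=(r/n)zz^{\mathsf T}+\dd_H^{-1/2}\bigl(A_H-(r/n)\one\one^{\mathsf T}\bigr)\dd_H^{-1/2}$ with $z=\dd_H^{-1/2}\one$ and then applies the positive semidefiniteness of the rank-one term together with the variational characterization of eigenvalues. The difference is that you replace this exact rank-one term by $\frac1n\one\one^{\mathsf T}$ and invoke Weyl's inequality. That costs you the extra (easy) estimates $\|A_H\|=O(r)$ and two-sided uniform degree control, whereas the paper's perturbation bound $\|A_H-(r/n)\one\one^{\mathsf T}\|/\delta(H)=o(1)$ needs only $\delta(H)=\Omega(r)$.
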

\begin{proof}
Let $E=A_H-(r/n)\one\one^{\mathsf T}$ and
$z=\dd_H^{-1/2}\one$. Then
$S_H=(r/n)zz^{\mathsf T}+\dd_H^{-1/2}E\dd_H^{-1/2}$, and the
second term has norm at most $\|E\|/\delta(H)=o(1)$.
The first term is positive semidefinite, so every unit vector has
Rayleigh quotient at least $-o(1)$ under $S_H$. On the subspace
orthogonal to $z$, the rank-one term vanishes, so every unit vector
in that subspace has Rayleigh quotient at most $o(1)$. The variational
characterization of eigenvalues therefore bounds the second-largest
eigenvalue from above by $o(1)$ and the smallest eigenvalue from below by $-o(1)$. Since $S_H$ has
eigenvalue one, this must be the largest eigenvalue of $S_H$ for all
sufficiently large $n$. The conclusion follows.
\end{proof}

\begin{lemma}\label{lem:random}
Suppose that $d=np\gg\log n$ and $p=o(1)$. Whp,
$G\sim G(n,p)$ satisfies $|D(G)|=(1/2+o(1))n$,
$d_G(v)=(1+o(1))d$ for every vertex, and
\[
\Delta_2(G)=O(d^2/n+\log n),\qquad
\left\|A_G-\frac dn\one\one^{\mathsf T}\right\|
=O\bigl(\sqrt{d\log n}\bigr).
\]
Consequently, $\Delta_2(G)=o(d)$ and $\lambda(G)=o(1)$.
\end{lemma}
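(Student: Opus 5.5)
We prove the four properties separately, each failing with probability $o(1)$, and finish with a union bound.

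\emph{Degrees.} Each $d_G(v)\sim\operatorname{Bin}(n-1,p)$ has mean $(1+o(1))d$. Fix $\eta=\eta(n)\to0$ slowly, e.g.\ $\eta=(\log n/d)^{1/3}$. Since $d\gg\log n$, we have $\eta^2 d/3\gg\log n$, so Chernoff gives $\Pr(|d_G(v)-d|\ge\eta d)\le n^{-\omega(1)}$. A union bound over the $n$ vertices then gives $d_G(v)=(1+o(1))d$ uniformly.

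\emph{Common neighbours.} For $u\neq v$, $|N(u)\cap N(v)|\sim\operatorname{Bin}(n-2,p^2)$ has mean $\mu\le d^2/n$. Put $t=\max\{e^2\mu,\,3\log n\}$. The second Chernoff bound gives $\Pr(Y\ge t)\le (e\mu/t)^t\le e^{-t}\le n^{-3}$, and a union bound over $n^2$ pairs yields $\Delta_2(G)=O(d^2/n+\log n)$. This is $o(d)$ because $d/n=p=o(1)$ and $\log n=o(d)$.

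\emph{Odd degrees.} $\Pr(d_G(v)\text{ odd})=(1-(1-2p)^{n-1})/2=1/2+o(1)$, since $(1-2p)^{n-1}\le e^{-2p(n-1)}\to0$. Hence $\mathbb E|D(G)|=(1/2+o(1))n$. For concentration, $|D(G)|$ is a function of the edge indicators, and flipping one edge changes it by at most $2$. McDiarmid's inequality over $\binom n2$ edges only gives deviations of order $n$, which is too weak. Instead we use a second-moment bound. For $u\ne v$, condition on the edge $uv$: the remaining degrees of $u$ and $v$ are independent binomials, each with parity $1/2+o(1)$ whatever the status of $uv$. So $\operatorname{Cov}(\one_{u\in D},\one_{v\in D})=o(1)$ uniformly, giving $\operatorname{Var}|D(G)|=o(n^2)$, and Chebyshev gives $|D(G)|=(1/2+o(1))n$ whp.

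\emph{Spectral bound.} We expect this to be the main obstacle; we would cite it rather than reprove it. Write $A_G-\frac dn\one\one^{\mathsf T}=(A_G-\mathbb E A_G)-\frac dn I$, using $\mathbb EA_G=p(\one\one^{\mathsf T}-I)$ and $d=np$. The second term has norm $p=o(1)$. For the first, apply the matrix Bernstein inequality to the independent centred symmetric summands $X_{uv}=(\xi_{uv}-p)(E_{uv}+E_{vu})$, where $\xi_{uv}$ is the edge indicator. Each has norm at most $1$, and the variance proxy $\|\sum\mathbb EX_{uv}^2\|\le np$. Bernstein gives
\[
\Pr\bigl(\|A_G-\mathbb EA_G\|\ge t\bigr)\le 2n\exp\Bigl(-\frac{t^2/2}{d+t/3}\Bigr),
\]
so $t=C\sqrt{d\log n}$ makes this $o(1)$, using $d\gg\log n$. (Sharper $O(\sqrt d)$ bounds from Feige--Ofek are unnecessary.) Since $\sqrt{d\log n}=o(d)$, the norm is $o(r)$ with $r=d$. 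Combined with the degree bound, Lemma~\ref{lem:normalize} gives $\lambda(G)=o(1)$.
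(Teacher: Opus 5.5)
Your proof is correct and follows the paper's argument: Chernoff bounds plus a union bound for the degrees and codegrees (only your choice of thresholds differs), and Tropp's matrix Bernstein inequality for $A_G-\mathbb E A_G$ with variance proxy $(n-1)p(1-p)I$, followed by Lemma~\ref{lem:normalize} to get $\lambda(G)=o(1)$. The one real difference is the odd-degree count: the paper simply cites Claim~2.2 of \cite{KKS15}, whereas you prove it directly with a valid second-moment argument (conditioning on the pair $uv$ gives $\operatorname{Cov}(\one_{u\in D},\one_{v\in D})=o(1)$ uniformly, and Chebyshev finishes).
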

\begin{proof}
The odd-degree assertion is Claim~2.2 of~\cite{KKS15}; the
hypotheses $np\to\infty$ and $n(1-p)\to\infty$ hold here.
Let $\mu=(n-1)p$ and $t=C\sqrt{d\log n}$, where $C\ge10$
is fixed. Each degree has distribution $\operatorname{Bin}(n-1,p)$,
and $t/\mu=o(1)$. The Chernoff bounds give, for every $v$,
\[
 \Pr\bigl(|d_G(v)-\mu|\ge t\bigr)
 \le2\exp\left(-\frac{C^2d\log n}{3(n-1)p}\right)
 \le2n^{-C^2/3}\le n^{-3}.
\]
Consequently, $
 \Pr\bigl(\exists v:|d_G(v)-\mu|\ge t\bigr)
 \le\sum_{v\in V(G)}\Pr\bigl(|d_G(v)-\mu|\ge t\bigr)
 \le n^{-2}=o(1).$
Since $|d-\mu|=p$, all degrees are
$d+O(\sqrt{d\log n})=(1+o(1))d$ whp.
For distinct $u,v$, let $Y_{uv}=|N_G(u)\cap N_G(v)|$.
The variable $Y_{uv}$ is binomial with mean
$\mu_{uv}=(n-2)p^2\le d^2/n$, because distinct possible common
neighbors use disjoint pairs of edges. With
$t_0=C(d^2/n+\log n)$, the Chernoff bounds imply $
 \Pr(Y_{uv}\ge t_0)
 \le(e\mu_{uv}/t_0)^{t_0}
 \le(e/C)^{t_0}\le n^{-C\log(C/e)}\le n^{-4}.$
It follows that
$\Pr(\Delta_2(G)\ge t_0)\le\sum_{u<v}\Pr(Y_{uv}\ge t_0)
\le n^{-2}=o(1)$.

For the matrix estimate, apply Tropp's matrix Bernstein
inequality~\cite[Theorem~1.4]{Tropp12} to
$A_G-\mathbb E A_G$. For each possible edge $uv$, the corresponding
independent symmetric summand has its only nonzero entries at
positions $(u,v)$ and $(v,u)$, both equal to the indicator of
$uv\in E(G)$ minus $p$.
Each summand has mean zero and norm at most one. The expected square
has diagonal entries $p(1-p)$ at $u,v$ and zero elsewhere.
The sum of these expected squares is $(n-1)p(1-p)I$, where $I$ is the
identity matrix, so the variance parameter is at most $d$.
Applying the inequality to the sum and its negative yields
\[
\Pr\bigl(\|A_G-\mathbb E A_G\|\ge t\bigr)
\le2n\exp\left(-\frac{t^2}{2d+2t/3}\right).
\]
At $t=C\sqrt{d\log n}$ we have $t=o(d)$, so the denominator is
at most $3d$ for large $n$. The failure probability is therefore
at most $2n^{1-C^2/3}=o(1)$ for $C\ge10$. Since
$\mathbb E A_G=p(\one\one^{\mathsf T}-I)$, we obtain
\[
 \left\|A_G-\frac dn\one\one^{\mathsf T}\right\|
 \le\|A_G-\mathbb E A_G\|+\|pI\|
 \le C\sqrt{d\log n}+p=O(\sqrt{d\log n}).
\]
The probability that any of the four asserted properties fails is
at most the sum of the four failure probabilities, hence is $o(1)$.
Finally,
$d/n=p=o(1)$ and $\log n=o(d)$ imply $\Delta_2(G)=o(d)$, while
Lemma~\ref{lem:normalize} proves $\lambda(G)=o(1)$.
\end{proof}

We use the following consequence of the constructions
in~\cite[proofs of Theorems~5.1 and~5.3]{KKS15}.
\begin{lemma}[Kor\'andi--Krivelevich--Sudakov~\cite{KKS15}]\label{lem:core}
Suppose that $\log^6 n/n\le q=o(1)$. Whp,
$G_1\sim G(n,q)$ has an edge partition $G_1=H\dot\cup H_0$ such
that $H$ is even, $\Delta(H_0)=O(\log^2 n)$, and $H$ has a
decomposition into $o(n)$ cycles.
\end{lemma}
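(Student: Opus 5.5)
This lemma packages constructions from the proofs of Theorems~5.1 and~5.3 in~\cite{KKS15}, so the plan is to trace those proofs and check that they produce exactly the stated partition. We would not build a new argument.

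\textbf{Step 1: the typical properties.} For $q\ge\log^6 n/n$ and $q=o(1)$, the graph $G_1\sim G(n,q)$ whp has:
\begin{itemize}
\item all degrees $(1+o(1))nq$;
\item codegrees $O(nq^2+\log n)$;
\item a good expansion property: edge counts between sets of size at least $\log^2 n/q$ are close to their expectation.
\end{itemize}
These are the hypotheses that the KKS cycle-finding arguments use.

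\textbf{Step 2: the KKS procedure.} Their proof runs in two phases.
\begin{itemize}
\item \emph{Phase (a).} Remove a sparse spanning subgraph $H_0'$ of maximum degree $O(\log^2 n)$ so that the rest, $G_1-H_0'$, is even. Concretely, pair up the odd-degree vertices and join each pair by a short path inside a reserved random subgraph. The $\log^6 n$ lower bound on $nq$ leaves room for polylogarithmic losses here.
\item \emph{Phase (b).} Repeatedly extract long cycles, using Pósa rotation or the expansion, while the remaining graph is still dense enough. This reduces it to a graph $R$ with $\Delta(R)=O(\log^2 n)$, after covering most edges by $o(n)$ cycles.
\end{itemize}
The counting behind $o(n)$: in degree regime $d$, each cycle has length $\Omega(n/\text{polylog})$ or consumes many edges. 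Summing over dyadic degree scales from $nq$ down to $\log^2 n$ gives $o(n)$ cycles in total. This is the content of Theorem~5.3, the sparse case, with Theorem~5.1 handling the dense start.

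\textbf{Step 3: restore evenness.} Phase (b) leaves a leftover $R$ that is not necessarily even. We set $H_0$ to be the leftover edges together with the parity-fixing edges, and $H$ to be the union of the extracted cycles. A union of edge-disjoint cycles is automatically even, so $H$ is even and decomposes into the extracted $o(n)$ cycles. Both the leftover and the parity-fixing graph have maximum degree $O(\log^2 n)$, so $\Delta(H_0)=O(\log^2 n)$ holds.

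\textbf{Main obstacle.} The hard part is checking that KKS's accounting genuinely gives $o(n)$ cycles together with a leftover of maximum degree $O(\log^2 n)$. Their stated theorem counts cycles and edges jointly, $n/4+nq/2+o(n)$, and the single edges correspond to the leftover. So we must verify two things inside their proof:
\begin{itemize}
\item the many single edges all lie in a graph of bounded polylogarithmic maximum degree, rather than being counted globally;
\item the cycle count alone is $o(n)$.
\end{itemize}
We would look for the stage where they stop at degree $\log^2 n$, or at an $O(\log^2 n)$-degenerate residue, and read the degree bound off there.
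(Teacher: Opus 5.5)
Your strategy of reading the lemma off the constructions in the proofs of \cite[Theorems~5.1 and~5.3]{KKS15} is the paper's strategy. However, you look for the degree bound in the wrong place and never supply the argument that actually gives it.

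The single edges in the count $n/4+nq/2+o(n)$ are the parity-correction edges $E_0$ (note $|E_0|\ge\odd(G_1)/2\approx n/4$). Once $E_0$ is removed, the remaining even graph is decomposed entirely into cycles: $o(n)$ of them in the proof of Theorem~5.1, and $nq/2+o(n)=o(n)$ in the proof of Theorem~5.3. So one simply takes $H_0=E_0$, and no residue $R$ of cycle extraction is involved. Your Step~3 rests $\Delta(H_0)=O(\log^2 n)$ on the unsupported claim $\Delta(R)=O(\log^2 n)$, which is not what the cited construction uses or provides. (Also, if Phase~(a) has already made the graph even, deleting cycles keeps it even, so your remark that $R$ is ``not necessarily even'' contradicts your own setup.)

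The missing step is the bound on $\Delta(E_0)$ itself, which is not stated in \cite{KKS15} and must be added. The edges $E_0$ are chosen inside a reserved layer, called $G_2$ in the proof of Theorem~5.1 and $G_3$ in that of Theorem~5.3, which is distributed as $G(n,\log^2 n/n)$. Each degree in that layer is binomial with mean about $\log^2 n$. The Chernoff bound and a union bound over the $n$ vertices then give maximum degree at most $2\log^2 n$, except with probability $ne^{-\Omega(\log^2 n)}=o(1)$, and $\Delta(E_0)$ inherits this bound. Your Phase~(a) merely asserts the $O(\log^2 n)$ bound. Joining pairs of odd vertices by short paths does not by itself control how many paths pass through a single vertex; containment in this sparse layer is exactly what supplies that control.

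Two smaller corrections. First, Theorems~5.1 and~5.3 are not a dense start and a sparse finish of one procedure. They are separate proofs for the complementary ranges $q\le n^{-1/6}$ (Theorem~5.1) and $q\ge n^{-1/6}$ (Theorem~5.3), so the layer containing $E_0$ has to be identified in each. Second, your dyadic heuristic for the $o(n)$ cycle count is too lossy. Cycles of length $n/\mathrm{polylog}(n)$, summed over degree scales, give about $nq\cdot\mathrm{polylog}(n)$ cycles, which is not $o(n)$ when, for instance, $q=1/\log\log n$. The $o(n)$ actually comes from the bound $nq/2+o(n)$ of Theorem~5.3, where the cycles are almost spanning on average, combined with $q=o(1)$.
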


\begin{proof}
	The maximum-degree conclusion follows from the constructions, rather than just the stated number of pieces. In both cited proofs,
	the parity-correction edges are denoted by $E_0$; we write $H_0$ for
	the graph on the same vertex set whose edge set is $E_0$.
	
	 In the proof of
	\cite[Theorem~5.1, p.~870]{KKS15}, $H_0$ lies in the layer denoted by $G_2$ in that proof,
	and removing $H_0$ leaves a graph with a decomposition into $o(n)$ cycles. In the proof of
	\cite[Theorem~5.3, pp.~870--871]{KKS15}, $H_0$ lies in the layer denoted by $G_3$ in that proof,
	and the remaining graph has a decomposition into at most
	$nq/2+o(n)=o(n)$ cycles.
	
	The two ranges are respectively $q\le n^{-1/6}$ and $q\ge n^{-1/6}$.
	In both cases, removing $H_0$ yields an even graph $H = G_1 - H_0$.
	Both cited layers have distribution $G(n,\log^2 n/n)$.
	The Chernoff bounds, summed over the vertices, show that each
	layer has maximum degree at most $2\log^2 n$ except with probability
	$n e^{-\Omega(\log^2 n)}=o(1)$.
	Hence $\Delta(H_0) = O(\log^2 n)$ holds whp.
\end{proof}

\section{Pairing incident edges}\label{sec:pairing}

An \emph{incidence} at a vertex $v$ is a pair $(v,e)$ with $v\in e\in E(H)$.
A \emph{pair} refers to a two-element subset of a set.
A \emph{pairing} of a finite set of even cardinality is a partition of the set into disjoint pairs.
We will choose pairings of incidences independently at each vertex.

We use the following form of the Lov\'asz local lemma. In the
statement, a graph on a collection of events specifies which events
are neighbors; $\Gamma(\mathcal E)$ denotes the set of neighbors of
$\mathcal E$. The graph is not required to describe independence.
The conditional inequality below is the property we need.
\begin{lemma}[lopsided Lovász local lemma, {\cite[Lemma~1]{LSmatch}}]\label{lem:lopsided}
Let $\mathcal B$ be a finite collection of events with a graph on
$\mathcal B$. Suppose that, for each $\mathcal E\in\mathcal B$
and every $\mathcal F\subseteq
\mathcal B\setminus(\Gamma(\mathcal E)\cup\{\mathcal E\})$,
\[
 \Pr\left(\mathcal E\,\middle|\,
       \bigcap_{\mathcal A\in\mathcal F}\overline{\mathcal A}\right)
 \le\Pr(\mathcal E)
\]
whenever $\Pr\big(\bigcap_{\mathcal A\in\mathcal F}\overline{\mathcal A}\big)>0$. If there exist numbers $x_{\mathcal E}\in[0,1)$ such that
$\Pr(\mathcal E)\le x_{\mathcal E}\prod_{\mathcal A\in\Gamma(\mathcal E)}(1-x_{\mathcal A})$
for all $\mathcal E\in\mathcal B$,
then $\Pr\bigl(\bigcap_{\mathcal E\in\mathcal B}\overline{\mathcal E}\bigr)>0$.
\end{lemma}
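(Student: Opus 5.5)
The plan is to run the standard inductive proof of the local lemma, using the conditional inequality in place of mutual independence. Write $\overline{\mathcal S}=\bigcap_{\mathcal A\in\mathcal S}\overline{\mathcal A}$ for $\mathcal S\subseteq\mathcal B$, with $\overline{\varnothing}$ the whole space. The two claims below are proved together by induction on $|\mathcal S|$.
\begin{enumerate}
\item[(a)] $\Pr(\overline{\mathcal S})\ge\prod_{\mathcal A\in\mathcal S}(1-x_{\mathcal A})>0$.
\item[(b)] For every $\mathcal E\in\mathcal B\setminus\mathcal S$ we have $\Pr(\mathcal E\mid\overline{\mathcal S})\le x_{\mathcal E}$.
\end{enumerate}
The case $\mathcal S=\mathcal B$ of (a) is exactly the conclusion of Lemma~\ref{lem:lopsided}, since every $x_{\mathcal A}<1$.

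The argument will take the following steps.
\begin{enumerate}
\item \emph{Base case.} For $\mathcal S=\varnothing$, claim (a) is trivial. Claim (b) reads $\Pr(\mathcal E)\le x_{\mathcal E}\prod_{\mathcal A\in\Gamma(\mathcal E)}(1-x_{\mathcal A})\le x_{\mathcal E}$.
\item \emph{Claim (a) for $\mathcal S$.} Assume (a) and (b) for all strictly smaller sets. Order $\mathcal S=\{\mathcal A_1,\dots,\mathcal A_m\}$ and use the chain rule
\[
\Pr(\overline{\mathcal S})=\prod_{i=1}^m\Bigl(1-\Pr\bigl(\mathcal A_i\,\big|\,\overline{\{\mathcal A_1,\dots,\mathcal A_{i-1}\}}\bigr)\Bigr).
\]
Each conditioning event has positive probability by (a) for smaller sets. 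Each conditional probability is at most $x_{\mathcal A_i}$ by (b) for smaller sets. This gives (a) for $\mathcal S$.
\item \emph{Splitting for claim (b).} Fix $\mathcal E\notin\mathcal S$ and split $\mathcal S=\mathcal S_1\cup\mathcal S_2$, where $\mathcal S_1=\mathcal S\cap\Gamma(\mathcal E)$ and $\mathcal S_2=\mathcal S\setminus\Gamma(\mathcal E)$.
\item \emph{Case $\mathcal S_1=\varnothing$.} Then $\mathcal S_2=\mathcal S$ avoids $\Gamma(\mathcal E)\cup\{\mathcal E\}$, and $\Pr(\overline{\mathcal S})>0$ by (a). The hypothesis of the lemma gives $\Pr(\mathcal E\mid\overline{\mathcal S})\le\Pr(\mathcal E)\le x_{\mathcal E}$.
\item \emph{Case $\mathcal S_1\neq\varnothing$.} Write
\[
\Pr(\mathcal E\mid\overline{\mathcal S})=\frac{\Pr(\mathcal E\cap\overline{\mathcal S_1}\mid\overline{\mathcal S_2})}{\Pr(\overline{\mathcal S_1}\mid\overline{\mathcal S_2})},
\]
which is legitimate because $\Pr(\overline{\mathcal S_2})\ge\Pr(\overline{\mathcal S})>0$.
\item \emph{Numerator.} It is at most $\Pr(\mathcal E\mid\overline{\mathcal S_2})$. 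Since $\mathcal S_2$ avoids $\Gamma(\mathcal E)\cup\{\mathcal E\}$, the lopsided hypothesis bounds this by $\Pr(\mathcal E)\le x_{\mathcal E}\prod_{\mathcal A\in\Gamma(\mathcal E)}(1-x_{\mathcal A})$.
\item \emph{Denominator.} Let $\mathcal S_1=\{\mathcal A_1,\dots,\mathcal A_k\}$. Expand
\[
\Pr(\overline{\mathcal S_1}\mid\overline{\mathcal S_2})=\prod_{j=1}^k\Bigl(1-\Pr\bigl(\mathcal A_j\,\big|\,\overline{\mathcal S_2\cup\{\mathcal A_1,\dots,\mathcal A_{j-1}\}}\bigr)\Bigr).
\]
Each conditioning set is a proper subset of $\mathcal S$ not containing $\mathcal A_j$, because $k\ge1$. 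Hence (b) for smaller sets bounds each factor below by $1-x_{\mathcal A_j}$.
\item \emph{Combining.} The denominator is therefore at least $\prod_{\mathcal A\in\mathcal S_1}(1-x_{\mathcal A})\ge\prod_{\mathcal A\in\Gamma(\mathcal E)}(1-x_{\mathcal A})$, since each factor lies in $(0,1]$. Dividing the numerator bound by this gives $\Pr(\mathcal E\mid\overline{\mathcal S})\le x_{\mathcal E}$.
\end{enumerate}

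I expect the main obstacle to be bookkeeping rather than any hard estimate. First, every conditional probability must be well defined, which is why (a) and (b) are proved simultaneously and positivity is carried along. Second, the lopsided hypothesis may only be applied to sets disjoint from $\Gamma(\mathcal E)\cup\{\mathcal E\}$. The split into $\mathcal S_1$ and $\mathcal S_2$ guarantees this, and the inequality is used only in the form $\Pr(\mathcal E\mid\overline{\mathcal S_2})\le\Pr(\mathcal E)$, never as genuine independence.
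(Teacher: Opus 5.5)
Your proof is correct. It is the standard inductive proof of the (lopsided) local lemma: the conditional hypothesis is used only in the form $\Pr\bigl(\mathcal E\mid\bigcap_{\mathcal A\in\mathcal S_2}\overline{\mathcal A}\bigr)\le\Pr(\mathcal E)$ with $\mathcal S_2$ disjoint from $\Gamma(\mathcal E)\cup\{\mathcal E\}$, and positivity of every conditioning event is carried along by proving (a) for $\mathcal S$ before (b) for $\mathcal S$. The paper gives no proof of its own and simply imports the lemma from~\cite{LSmatch}, so your argument is essentially the standard proof behind that citation.
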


The proof of the following key lemma relies on random pairings of incidences, which induce a decomposition into trails. We use Lemma \ref{lem:lopsided} to ensure that these trails are simple paths, with the required bounds obtained through a switching argument and spectral estimates for nonbacktracking walks.

\begin{lemma}[Key Lemma]\label{lem:pairing}
Let $H$ be a simple graph on $n$ vertices. Suppose that a positive
sequence $r=r(n)$ and a fixed constant $c>0$ satisfy
$\log n=o(r)$, $r^3=o(n)$ and $|D(H)|\ge cn$. Suppose also that
$d_H(v)=(1+o(1))r$ uniformly over $v$, $\Delta_2(H)=o(r)$ and
$\lambda(H)=o(1)$. Then $H$ has a decomposition into exactly $|D(H)|/2$ paths for all sufficiently large $n$.
\end{lemma}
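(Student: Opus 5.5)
We plan to build the decomposition from a pairing of incidences chosen independently at every vertex. At a vertex $v$ of even degree, pair all $d_H(v)$ incidences uniformly at random. At a vertex $v\in D(H)$, first pick one incidence uniformly at random to be left unpaired, and then pair the remaining $d_H(v)-1$ incidences uniformly at random. Gluing edges whose incidences are paired at a common vertex splits $E(H)$ into maximal trails and closed trails. Each open trail has its two ends at unpaired incidences, so there are exactly $|D(H)|/2$ open trails, each joining two distinct odd vertices. It therefore suffices to find pairings with two properties. First, there should be no closed trails. Second, no trail should revisit a vertex, so every trail is a path. Both failures are witnessed by a short list of ``bad'' local configurations, and we aim to avoid all of them simultaneously using Lemma~\ref{lem:lopsided}.

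\textbf{Bad events.} A trail that is not a path contains a closed subtrail $v_0e_1v_1\cdots e_kv_k$ with $v_k=v_0$, $v_1,\dots,v_{k-1}$ distinct, and consecutive edges paired at $v_1,\dots,v_{k-1}$. Such a configuration is also present in any closed trail that is not a cycle. A closed trail that is a cycle $C$ means that all $|C|$ consecutive pairs along $C$ were chosen. For each cycle $C$ of length $k\ge3$ we let $\mathcal E_C$ be the event that the $k-1$ pairings at internal vertices occur. This event only requires $k-1$ specified pairs, so it covers both failure types. The event $\mathcal E_C$ depends only on the pairings at the vertices of $C$. Its probability is about $\prod_{i}(1/(d_H(v_i)-1))\approx r^{-(k-1)}$, using the standard fact that a fixed pair appears in a uniform pairing with probability $1/(2m-1)$. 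The unpaired choice at odd vertices only lowers this. The number of such cycles through a fixed vertex with length $k$ is at most $r^{k-1}\cdot$(probability a nonbacktracking walk returns). The condition $\lambda(H)=o(1)$ together with $\Delta_2(H)=o(r)$ bounds this return count by $o(1)\cdot r^{k-1}$ plus a main term of order $r^{k-1}\cdot r/n$.

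\textbf{Lopsidedness.} We will use the dependency graph in which $\mathcal E_C\sim\mathcal E_{C'}$ whenever $C$ and $C'$ share a vertex. Lopsidedness with respect to events at disjoint vertex sets is immediate, since pairings at different vertices are independent. The genuine issue is conditioning on the avoidance of events that share vertices but are declared non-neighbors; we avoid this by declaring every vertex-sharing pair to be neighbors. Thus we only need $\Pr(\mathcal E_C)\le x_C\prod_{C'\cap C\ne\varnothing}(1-x_{C'})$. We take $x_C=2\Pr(\mathcal E_C)$ (or $a^{|C|}r^{-(|C|-1)}$ with a suitable constant). This requires
\[
\sum_{C'\cap C\neq\varnothing}x_{C'}\le |C|\max_v\sum_{C'\ni v}x_{C'}\le O(1)\quad\text{in the exponent},
\]
that is, $\sum_{C'\ni v}x_{C'}=O(1/|C|)$ per vertex. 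A constant bound is not enough, and this is the main obstacle: long cycles $C$ have $|C|$ vertices and each vertex sees $\Theta(1)$ weight of triangles and short cycles.

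\textbf{Handling the obstacle.} Restricting to short cycles fixes this. Cycles through $v$ of length $k$ carry total weight about $r^{k-1}\cdot(r/n+o(1)\text{-corrections})\cdot r^{-(k-1)}$. For $k=3$ this is $\Delta_2 r/r^2\cdot$, which is $o(1)$. For general $k$ the spectral bound gives at most $r/n+\lambda^{k-2}$-type terms, so the sum over all $k$ is $o(1)$ plus $k_{\max}r/n$. Long cycles cannot all be handled by the local lemma. We treat them as a global event instead: the expected number of closed trails of length $\ge L$ is $\sum_k (\#\text{cycles})\,r^{-k}\lesssim n\sum_{k\ge L}(1/k)(\text{return prob})$. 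Using $\lambda=o(1)$, this is roughly $n\cdot r/n\cdot\log n$ for the cycle part, and such leftover cycles are few. We would then run the local lemma only on events of length $\le L=C\log n/\log r$, where the per-vertex weight is $o(1/L)$. This holds because $r^3=o(n)$ makes the main $r/n$ terms negligible and the $\lambda^{k}$ terms are summable. Finally, since the local lemma gives only positive probability, we must argue that its conditional measure still keeps the long-cycle count zero; alternatively, we must destroy remaining long closed trails by a local switching at a vertex, merging a closed trail into an adjacent open trail (each vertex lies on some open trail since $|D(H)|\ge cn$ and $\lambda=o(1)$ gives expansion). That switching repair step, and checking that it creates no new short repeated-vertex configuration, is where we expect the real difficulty.
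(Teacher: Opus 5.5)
Your proposal has a genuine gap exactly where you suspect it: loops longer than $L=C\log n/\log r$ are never controlled. The route you sketch for them cannot work, because these configurations are not rare.

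Heuristically, there are $|D(H)|/2=\Theta(n)$ trails of average length $\Theta(r)$. Since the walk mixes ($\lambda=o(1)$), each trail revisits a vertex with probability of order $r^2/n$. So the expected number of repeated-vertex configurations is of order $r^2\to\infty$, and almost all of them close loops of length of order $r\gg\log n/\log r$.

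A direct count, for a typical graph such as $G(n,p)$, agrees. An event on an $\ell$-cycle with a marked repeated vertex $z$ has probability roughly $r^{-(\ell-1)}e^{-\Theta(\ell/r)}$, where the decay factor comes from the odd vertices. There are roughly $r^{\ell}/2$ such pairs $(C,z)$, and summing over $\ell\ge L$ gives order $r^2$. Even your own estimate $r\log n$ for long closed trails tends to infinity.

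So no first-moment bound disposes of long loops. In any case, the local lemma yields only a possibly exponentially small probability, which cannot simply be intersected with a ``few long loops'' event. What remains is the switching repair, which you leave undone, so as it stands the argument does not prove the lemma. A minor point: your $\mathcal E_C$ must be indexed by a cycle together with its repeated vertex, since ``internal vertices'' is not defined for an unrooted cycle.

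The missing idea is a sparser dependency graph, which lets the local lemma absorb all lengths at once. Joining events whenever their cycles share a vertex costs a factor of about $r$. Under the stated hypotheses, the weighted load per vertex can only be bounded by $O(\Delta_2(H)/r+\lambda+r^3/n)=o(1)$. But the affordable load per vertex of an $\ell$-cycle is only $O(1/r)$: the weights must grow more slowly than the $e^{\Theta(\ell/r)}$ decay in the length, or the sum over long cycles diverges.

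The paper instead joins two events only when their \emph{supports} (the sets of prescribed incidences) intersect. It verifies the lopsided condition for events that share a vertex but use disjoint incidences via the switching map $\phi_{ab}$, which replaces $aa',bb'$ by $ab,a'b'$. This map pushes the uniform pairing onto the uniform pairing conditioned on containing $ab$, and it preserves every pair disjoint from $\{a,b\}$. Hence avoiding the non-neighbour events after switching implies avoiding them before.

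The paper then takes weights $x_{C,z}=e^{a|C|/r}\Pr(\mathcal E_{C,z})$. An event of length $\ell$ has $2(\ell-1)$ support incidences, and the load per incidence is $O(\Delta_2(H)/r^2+\lambda/r+r^2/n)=o(1/r)$. So the total neighbour weight $o(\ell/r)$ is absorbed by the factor $e^{a\ell/r}$.

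The long-length part of this bound uses the weighted nonbacktracking matrix $B$ with entries $1/q_v$, where $q_v=d_H(v)$ at odd vertices. Its rows lose mass $1/d_H(v)$ at odd vertices, which carry stationary mass at least $3c/4$, and mixing within $O(\log n)$ steps gives $\|B^s\|_\infty\le 2e^{-bs/r}$. This is where $|D(H)|\ge cn$ and $r^3=o(n)$ are used. Your remark that the unpaired choice at odd vertices ``only lowers'' the probabilities is precisely this effect, but it has to be used quantitatively.
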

\begin{proof}
We may assume that $c\le1$ and that $n$ is large enough for
$3r/4\le d_H(v)\le5r/4$ and $\lambda(H)\le1/2$.
At each even-degree vertex, choose a pairing of the incidences
uniformly from all such pairings. At each odd-degree vertex, add
one auxiliary object, to the set of
incidences and choose a uniform pairing of the enlarged set; we
refer to the original incidences as \emph{real incidences} to
distinguish them from the auxiliary object. When tracing trails, ignore the auxiliary object and treat its real partner as unpaired. The choices at different vertices are independent. Thus exactly one real incidence is unpaired at every vertex of $D(H)$,
and none is unpaired elsewhere.

To describe the resulting trails, form a graph with vertex set
$E(H)$, joining two vertices when the corresponding
edges are paired at a common endpoint. Each vertex of this graph has degree at most two, as an edge can belong to at most two pairs, one at each of its endpoints. Thus every component of this auxiliary graph is an isolated vertex, a path, or a cycle. No edge of $H$ is
repeated, since it occurs as only one vertex of the auxiliary graph. Traversing a component corresponds to following a sequence of edges paired, which forms a trail in $H$. A path component, including an isolated vertex, corresponds in $H$ to a trail with two unpaired ends; a cycle component corresponds to a closed trail. Each component of the first kind uses two unpaired incidences, and every unpaired incidence occurs
in one such component. There are exactly $|D(H)|/2$ components of the
first kind. It remains
to exclude closed trails and repeated vertices in the other trails.

For $v\in D(H)$ let $q_v=d_H(v)$, and for $v\notin D(H)$ let
$q_v=d_H(v)-1$. For large $n$, these numbers satisfy
$r/2\le q_v\le2r$. For each cycle $C$ and each vertex $z\in V(C)$, called the
\emph{marked vertex}, define a \emph{bad event} $\mathcal E_{C,z}$
that, at every vertex $v\in V(C)\setminus\{z\}$, the two incidences
corresponding to the two edges of $C$ incident with $v$ are paired
with each other. A prescribed pair occurs in a uniform pairing of $m$ objects with probability $1/(m-1)$. Independence between vertices therefore implies
\begin{equation}\label{eq:event}
\Pr(\mathcal E_{C,z})
=\prod_{v\in V(C)\setminus\{z\}}q_v^{-1}.
\end{equation}
If a trail repeats a vertex, choose a shortest nonempty segment
whose first and last vertices coincide. The other vertices of the
segment are distinct, and the length is at least three because $H$ is simple
and the trail repeats no edge. The segment is therefore a cycle $C$. Taking the repeated vertex as
the mark $z$, at every vertex of $C$ other than $z$ the pairing contains the pair prescribed by $\mathcal E_{C,z}$. The same
argument applies to a closed trail. Hence, if all these events are avoided, there is neither a closed trail nor a trail that repeats a vertex, so every component of the auxiliary graph is a path component and the corresponding trails in $H$ are simple paths. There are exactly $|D(H)|/2$ such components, which gives the required decomposition and proves the lemma. We now show that all these events can be avoided simultaneously.

The \emph{support} of $\mathcal E_{C,z}$ is the set of real incidences in the prescribed pairs. Join two distinct bad events
when the supports intersect, and use
$\mathcal E\sim\mathcal A$ when the events are joined.

\begin{claim}\label{clm:dependence}
For every bad event $\mathcal E$ and every collection $\mathcal F$
of nonneighbors of $\mathcal E$ not containing $\mathcal E$,
\[
 \Pr\left(\mathcal E\,\middle|\,
       \bigcap_{\mathcal A\in\mathcal F}\overline{\mathcal A}\right)
 \le\Pr(\mathcal E)
\]
whenever the conditioning event has positive probability.
\end{claim}
\begin{proof}[Proof of Claim~\ref{clm:dependence}]
We establish the claim by a standard switching argument on the pairings.
Fix two distinct objects $a,b$ in an even set $I$ of size $m$,
and let $\mathcal M(I)$ be the set of pairings of $I$. Define
$\phi_{ab}:\mathcal M(I)\to\{M\in\mathcal M(I):ab\in M\}$ by
\[
 \phi_{ab}(M)=
 \begin{cases}
 M,&ab\in M,\\
 (M\setminus\{aa',bb'\})\cup\{ab,a'b'\},
   &aa',bb'\in M\text{ and }ab\notin M.
 \end{cases}
\]
Here $ab$ denotes the unordered pair $\{a,b\}$. Each pairing
containing $ab$ has exactly $1+2(m/2-1)=m-1$ preimages: the pairing
itself, and two preimages for each other pair $a'b'$, obtained by
replacing $ab,a'b'$ with $aa',bb'$ or $ab',ba'$. Therefore
$\phi_{ab}$ sends a uniform pairing to a uniform pairing conditional
on containing $ab$.

A bad event $\mathcal E$ prescribes at most one pair at each
vertex. Apply this map at each vertex where $\mathcal E$ prescribes a pair,
using the enlarged set $I$ (including the auxiliary object) at an
odd vertex, and leave all other pairings unchanged; denote the
resulting global map by $\Phi_{\mathcal E}$. Since the
original pairings at distinct vertices are independent and every
map acts only at one vertex, $\Phi_{\mathcal E}$ gives exactly
the distribution of all pairings conditional on $\mathcal E$.

The map $\phi_{ab}$ removes only pairs containing $a$ or $b$.
Consequently, $\Phi_{\mathcal E}$ preserves every pair disjoint
from the support of $\mathcal E$. If an event in $\mathcal F$
holds before applying $\Phi_{\mathcal E}$, all prescribed pairs
for that event remain present afterwards. Thus avoiding every
event of $\mathcal F$ after applying $\Phi_{\mathcal E}$ implies
avoiding every event of $\mathcal F$ beforehand. The original and
conditional distributions therefore satisfy
\[
 \Pr\left(\bigcap_{\mathcal A\in\mathcal F}\overline{\mathcal A}
      \,\middle|\,\mathcal E\right)
 \le\Pr\left(\bigcap_{\mathcal A\in\mathcal F}
                        \overline{\mathcal A}\right).
\]
Bayes' rule proves the claim. This argument does not require events
using disjoint incidences at the same vertex to be independent.
\end{proof}

We now estimate the total weight of events using one incidence.
Let $P=P_H$, $S=S_H$, $\lambda=\lambda(H)$ and
$\vol(H)=\sum_{v\in V(H)}d_H(v)$. For
$w=(\sqrt{d_H(v)}:v\in V(H))$, the matrix
$\Pi=ww^{\mathsf T}/\vol(H)$ satisfies
$S\Pi=\Pi S=\Pi$ and $\Pi^2=\Pi$. Since $\Pi$ projects onto the eigenspace of the eigenvalue $1$, we
have $\|S-\Pi\|=\lambda$.
If $\mathbf e_v$ has entry one at $v$ and zero elsewhere, then
\[
\|(S-\Pi)\mathbf e_v\|_2^2
=(S^2-\Pi)_{vv}\leq (S^2)_{vv}
=\sum_{u\in N_H(v)}\frac{1}{d_H(v)d_H(u)}
\le\frac2r.
\]
For $k\ge2$, the identity
$S^k-\Pi=(S-\Pi)(S-\Pi)^{k-2}(S-\Pi)$ and Cauchy--Schwarz give $
 |(S^k-\Pi)_{vu}|
 \le\|(S-\Pi)\mathbf e_v\|_2\,
       \|(S-\Pi)^{k-2}\|\,
       \|(S-\Pi)\mathbf e_u\|_2
 \le\frac2r\lambda^{k-2}.$
Since $3r/4\le d_H(v)\le5r/4$, we have $d_H(u)/\vol(H)$ is at most $2/n$. Since $P^k=\dd_H^{-1/2}S^k\dd_H^{1/2}$ and
$d_H(u)/d_H(v)\le5/3<4$, for $k\geq 2$, we have
\begin{equation}\label{eq:walk}
	\begin{aligned}
		\left|(P^k)_{vu}-\frac{d_H(u)}{\vol(H)}\right|
		&=\sqrt{\frac{d_H(u)}{d_H(v)}}\,|(S^k-\Pi)_{vu}|
		\le\frac4r\lambda^{k-2},\\
		(P^k)_{vu}
		&\le\frac{d_H(u)}{\vol(H)}+\frac4r\lambda^{k-2}
		\le\frac2n+\frac4r\lambda^{k-2}.
	\end{aligned}
\end{equation}

Replace each undirected edge $uv$ of $H$ by the two directed
edges $(u,v)$ and $(v,u)$. Let $B$ be the square matrix of order
$2|E(H)|$ indexed by these directed edges, with
$B_{(u,v),(v,y)}=q_v^{-1}$ for $y\ne u$ and all other entries zero.
A \emph{nonbacktracking walk} never follows an edge
immediately by the same edge in the opposite direction.
Thus $(B^s)_{e,f}$ is the sum of the products of the transition
weights over nonbacktracking walks from directed edge $e$ to
directed edge $f$ in $s$ transitions. These are counting weights;
they do not assume independent choices along a pairing trail.
For a row indexed by $(u,v)$, its entries sum to $1$ when $v$ is an
even-degree vertex, and to $1-1/d_H(v)$ when $v\in D(H)$. In
particular, $\|B\|_\infty\le1$.

Let $b=c/16$ and $T=\lceil8\log n\rceil$. We state and prove the following claim.
\begin{claim}\label{clm:decay}
	$\|B^s\|_\infty\le2e^{-bs/r}$ for all integers $s\ge 0$.
\end{claim}
\begin{proof}[Proof of Claim~\ref{clm:decay}]
The stationary probability of a vertex $u$ for the random walk is
$d_H(u)/\vol(H)$. Since $d_H(v)=(1+o(1))r$ uniformly over $v$, we have $\delta(H)/\Delta(H)\to1$. The total stationary probability of the odd-degree vertices
satisfies $\sum_{u\in D(H)}\frac{d_H(u)}{\vol(H)}\ge\frac{|D(H)|\delta(H)}{n\Delta(H)}\ge c\,\frac{\delta(H)}{\Delta(H)}\ge\frac{3c}{4}$ for large $n$.
 Summing~\eqref{eq:walk} over $D(H)$, with $\lambda\le1/2$ and
$s\ge T$, gives $\sum_{u\in D(H)}(P^s)_{vu}\ge\frac{3c}{4}-\frac{4n}{r}\lambda^{T-2}\ge\frac{3c}{4}-\frac{16}{r}n^{1-8\log 2}\ge\frac c2$.

Let $\chi$ be the vector on directed edges with $\chi_{(u,v)}=1$ if
$v\in D(H)$, and zero otherwise. Consider a random walk starting at
$v$, with $u$ regarded as the preceding vertex. At each step, the
conditional probability of immediately returning to the preceding
vertex is at most $1/\delta(H)$, including a first step back to $u$.
Summing over the first $s$ steps bounds the probability of at least
one such return by $s/\delta(H)$. For a walk
$v=x_0,x_1,\ldots,x_s$ with no such return, the product of the
$B$-entries is $
 \prod_{i=0}^{s-1}q_{x_i}^{-1}
 \ge\prod_{i=0}^{s-1}d_H(x_i)^{-1},$
where the right-hand side is the probability of the same walk under
the random-walk matrix $P$.
For $T\le s<2T$, it follows that
\[
(B^s \chi)_{(u,v)}
\ge\sum_{x\in D(H)}(P^s)_{vx}-\frac{s}{\delta(H)}
\ge\frac c4,
\]
since $s/\delta(H)\le2T/\delta(H)=o(1)\le c/4$ for large $n$.
For every directed edge $(u,v)$, $
 (\one-B\one)_{(u,v)}=
 \begin{cases}
  1/d_H(v),&v\in D(H),\\
  0,&v\notin D(H).
 \end{cases}$ Hence $\one-B\one\ge {\chi}/{2r}$.
All vector inequalities are entrywise. Since $B$ has nonnegative
entries, telescoping gives
\[
 \one-B^{2T}\one
 =\sum_{s=0}^{2T-1}B^s(\one-B\one)
 \ge\frac1{2r}\sum_{s=T}^{2T-1}B^s\chi
 \ge\frac{cT}{8r}\one.
\]
Since $B^{2T}$ is nonnegative,
$\|B^{2T}\|_\infty\le1-cT/(8r)$. Let $s=2Tj+h$, where
$j=\lfloor s/(2T)\rfloor$ and $0\le h<2T$. We have
$0<cT/(8r)<1$, $j\ge s/(2T)-1$ and $T/r\to0$.
Using $(1-x)^j\le e^{-xj}$, the product inequality for the
row-sum norm, and $\|B^h\|_\infty\le1$, we obtain $
 \|B^s\|_\infty
 \le\left(1-\frac{cT}{8r}\right)^j
 \le e^{-cTj/(8r)}
 \le e^{cT/(8r)}e^{-cs/(16r)}
 \le2e^{-cs/(16r)}.$
This proves Claim~\ref{clm:decay}.
\end{proof}

Choose $a=b/2$ and assign each bad event $\mathcal E_{C,z}$ the weight
$x_{C,z}=e^{a|C|/r}\Pr(\mathcal E_{C,z})$. For a real incidence
$h$, let $L_h$ be the sum of these weights over events whose
support contains $h$.
\begin{claim}\label{clm:load}
For every real incidence $h$, $
L_h=O\left(\frac{\Delta_2(H)}{r^2}+\frac{\lambda}{r}
+\frac{r^2}{n}\right)=o(1/r)$ 
uniformly over $h$.
\end{claim}
\begin{proof}[Proof of Claim~\ref{clm:load}]
Suppose that $h$ is the incidence of $uv$ at $v$, and let
$e=(u,v)$. Every cycle of length $\ell$ containing $uv$ has a
unique direction starting with $e$. The resulting closed walk contributes $\prod_{x\in V(C)}q_x^{-1}$
to $(B^\ell)_{e,e}$, 
so $\prod_{x\in V(C)}q_x^{-1}\le (B^\ell)_{e,e}$. For a
mark $z$, equation~\eqref{eq:event} gives $
\Pr(\mathcal E_{C,z})
=q_z\prod_{x\in V(C)}q_x^{-1}
\le 2r\prod_{x\in V(C)}q_x^{-1}.$ Since there are at most $\ell$ marks, the total over the marks is
at most $2r\ell\prod_{x\in V(C)}q_x^{-1}$. 
Consequently, since $x_{C,z}=e^{a\ell /r}\Pr(\mathcal E_{C,z})$, we have

\begin{equation}\label{eq:return}
L_h\le2r\sum_{\ell=3}^{n}\ell e^{a\ell/r}(B^\ell)_{e,e}.
\end{equation}
For $\ell=3$, the edge $uv$ belongs to at most $\Delta_2(H)$
triangles. Each has at most three marks, and each corresponding
probability in~\eqref{eq:event} is $O(r^{-2})$. The contribution
from triangles is therefore $O(\Delta_2(H)/r^2)$.

For the remaining terms, we compare $B$ with $P$. We have $d_H(x)/q_x=1$ at odd-degree vertices and
$d_H(x)/q_x=1+1/q_x\le e^{2/r}$ at even-degree vertices since $1/q_x\le2/r$, so
$1/q_x\le e^{2/r}/d_H(x)$.	
A contributing walk makes $m-1$ steps from $v$ to $x$ and then
the prescribed step $xy$. Comparing each of those first steps with
its $P$-probability, for $m\geq 1$, we have
\begin{equation}\label{eq:comparison}
(B^m)_{(u,v),(x,y)}
\le\frac2r e^{2(m-1)/r}(P^{m-1})_{vx}\leq \frac2r e^{2m/r}(P^{m-1})_{vx}.
\end{equation}
For $m=1$, the formula uses $P^0=I$ and remains valid.
Since $T=o(r)$, $e^{2m/r}\le e^{2T/r}=e^{o(1)}=O(1)$ for $m\le T$. The identity
$\sum_{j\ge1}(j+3)\lambda^j
=\lambda/(1-\lambda)^2+3\lambda/(1-\lambda)$
shows that the series is $O(\lambda)$, since $\lambda\le1/2$. Thus~\eqref{eq:walk} and~\eqref{eq:comparison}
bound the terms $4\le\ell<T$ in~\eqref{eq:return} by
\[
O\left(\sum_{\ell=4}^{T-1}\ell
\left(\frac1n+\frac{\lambda^{\ell-3}}r\right)\right)
=O\left(\frac{T^2}{n}+\frac\lambda r\right).
\]

At $m=T$, the same estimates imply
$(B^T)_{f,e}=O(\frac 2r (P^{m-1})_{vx})=O(1/(nr))$ for all directed edges $f,e$, since
$\lambda^{T-3}=O(n^{-8\log2})=O(n^{-5})$. Then $(B^T)_{f,e}\le C/(nr)$ for some absolute constant $C>0$.
For $\ell\ge T$, Claim~\ref{clm:decay} and $e^{bT/r}=O(1)$ yield
\[
(B^\ell)_{e,e}
=\sum_f(B^{\ell-T})_{e,f}(B^T)_{f,e}
\le\frac{C}{nr}\|B^{\ell-T}\|_\infty
=O\left(\frac{e^{-b\ell/r}}{nr}\right).
\] All these bounds
are uniform over the directed edges. Consequently, the contribution
of these terms to~\eqref{eq:return}
is $O(n^{-1}\sum_{\ell\ge T}\ell e^{-(b-a)\ell/r})=O(r^2/n)$.
Indeed, with $z=e^{-(b-a)/r}$,
$\sum_{\ell\ge1}\ell e^{-(b-a)\ell/r}=z/(1-z)^2=O(r^2)$,
since $r(1-z)\to b-a>0$. Since
$T=o(r)$, we also have
$T^2/n=O(r^2/n)$. Combining the three ranges proves the first
bound in Claim~\ref{clm:load}. Multiplication by $r$ leaves
$O(\Delta_2(H)/r+\lambda+r^3/n)=o(1)$, proving the second bound.
\end{proof}

We finish by checking the numerical condition of Lemma~\ref{lem:lopsided}.
Let $L=\max_h L_h$. Since the support of an event on a cycle of length $\ell$ consists
of $2(\ell-1)$ incidences, the sum of the weights of its
neighbors is at most $2(\ell-1)L$, as every neighbor contains at
least one of these incidences. By~\eqref{eq:event}, we have $x_{C,z}=e^{a\ell/r}\Pr(\mathcal E_{C,z})\le b_\ell:=e^{a\ell/r}(2/r)^{\ell-1}$. Consecutive bounds satisfy $b_{\ell+1}/b_\ell=2e^{a/r}/r<1$ for large $n$. Then $b_\ell$ decreases with $\ell$ and is largest at $\ell=3$, and thus $x_{C,z}\leq b_\ell\leq  b_3=e^{3a/r}(2/r)^2=O(r^{-2})<1/2$. By
Claim~\ref{clm:load}, $L=o(1/r)$. Since $a>0$ is fixed, this
implies $4L\le a/r$ for large $n$. Using $1-x\ge e^{-2x}$ for
$0\le x\le1/2$, we obtain
\[
x_{C,z}\prod_{\mathcal E_{C',z'}\sim\mathcal E_{C,z}}(1-x_{C',z'})
\ge e^{a\ell/r}\Pr(\mathcal E_{C,z})e^{-4(\ell-1)L}
\ge\Pr(\mathcal E_{C,z}).
\]
 Since the displayed inequality is the numerical condition of
 Lemma~\ref{lem:lopsided} and Claim~\ref{clm:dependence} verifies
 its conditional probability condition, Lemma~\ref{lem:lopsided}
 gives a positive probability of avoiding all bad events. The
 resulting trails are exactly $|D(H)|/2$ simple paths.
\end{proof}

\section{Proof of the main theorem}\label{sec:main}
For each cycle $C$, we will select two distinct vertices of $V(C)$
at which additional edges may be attached. We call them its
\emph{attachment vertices}.

We use the classical bound that a graph on $k$ vertices can be
decomposed into $O(k\log k)$ cycles and single edges; see
\cite[Section~1]{CFS14}. For an even graph, the single edges left
after removing those cycles also form an even graph. Decomposing
that remainder into cycles uses no more cycles than the number
of its edges. It follows that there is an absolute constant $A$
such that every even graph with $k\le n$ nonisolated vertices has
a decomposition into at most $Ak\log n$ cycles, for all sufficiently
large $n$. 

We use the block decomposition of a graph and the fact that any
two vertices of a 2-connected graph lie on a common cycle;
see~\cite[Theorem~5.1 and Proposition~5.3]{Bondy2008}. A \emph{cut vertex} is a vertex whose removal increases the component count. A \emph {2-connected} graph is connected on at least three vertices with no cut vertex. A \emph{bridge} is an edge whose removal increases the component count. A \emph{block} is a maximal subgraph with no cut
vertex, including a bridge.

A \emph{capacity} bounds the flow on an arc, and the
\emph{flow} entering each intermediate vertex must equal the flow leaving
it. A \emph{cut} is the set of arcs separating the source from the sink; its capacity is the
sum of capacities of arcs directed from the source side to the
sink side. An arc is \emph{saturated} if its flow equals its capacity.

\begin{lemma}\label{lem:attachments}
Let $H$ be an even graph on $n$ vertices, and let $\mathcal C$ be
a cycle decomposition using the fewest cycles. Each cycle
$C\in\mathcal C$ can be assigned two distinct attachment vertices
$x_C,y_C\in V(C)$ so that every vertex is assigned at most
$K=\lceil4A\log n\rceil$ times, for all sufficiently large $n$.
\end{lemma}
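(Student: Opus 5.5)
The plan is to set the assignment up as an integral flow problem and verify the max-flow condition using the minimality of $\mathcal C$ together with the $Ak\log n$ bound stated just before the lemma.

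\textbf{Flow network.} Build a directed network with a source $\sigma$, a sink $\tau$, one node for each cycle $C\in\mathcal C$ and one node for each vertex $v\in V(H)$. Put arcs $\sigma\to C$ of capacity $2$, arcs $C\to v$ of capacity $1$ for every $v\in V(C)$, and arcs $v\to\tau$ of capacity $K$. An integral flow of value $2|\mathcal C|$ saturates every source arc and sends one unit from each $C$ to exactly two distinct vertices of $C$. Those two vertices are $x_C,y_C$, and the sink capacities give the bound of $K$ assignments per vertex. By the max-flow min-cut theorem and integrality of flows, it therefore suffices to show that every $\sigma$--$\tau$ cut has capacity at least $2|\mathcal C|$.

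\textbf{Cut condition.} Fix a cut, and let $\mathcal S$ be the cycles and $U$ the vertices on the source side. Its capacity is
\[
2|\mathcal C\setminus\mathcal S|+\sum_{C\in\mathcal S}|V(C)\setminus U|+K|U|.
\]
A cycle with at least two vertices outside $U$ already pays its $2$. So the deficit is $2|\mathcal S_0|+|\mathcal S_1|$, where $\mathcal S_0$ consists of the cycles with $V(C)\subseteq U$ and $\mathcal S_1$ of those with exactly one vertex outside $U$. It suffices to prove $|\mathcal S_0|+|\mathcal S_1|\le 2A|U|\log n$, since then $2|\mathcal S_0|+|\mathcal S_1|\le 4A|U|\log n\le K|U|$.

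\textbf{Bounding $\mathcal S_0$.} Any subfamily of a minimum cycle decomposition is itself a minimum cycle decomposition of the even graph it covers. Otherwise we could swap in a smaller decomposition and decrease $|\mathcal C|$. The union $F_0$ of $\mathcal S_0$ is an even graph with at most $|U|$ nonisolated vertices, so the bound before the lemma gives $|\mathcal S_0|\le A|U|\log n$.

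\textbf{Bounding $\mathcal S_1$ (main obstacle).} The union $F$ of $\mathcal S_0\cup\mathcal S_1$ may have many vertices outside $U$, so the $Ak\log n$ bound does not apply directly. Each outside vertex $w$ of $F$ meets only cycles of $\mathcal S_1$, in each of which both of its neighbours lie in $U$. My first attempt is to split each such $w$ into separate copies, one per cycle of $\mathcal S_1$ through $w$. The resulting graph $F'$ is still even and its cycles are in bijection with $\mathcal S_0\cup\mathcal S_1$. Vertices outside $U$ now have degree $2$, so each is an internal vertex of a unique path $u\,w\,u'$ with $u,u'\in U$. Replace each such path by a single (possibly parallel) edge $uu'$. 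This gives an even multigraph on $U$, and I would bound the number of cycles in a minimal decomposition of it by $A|U|\log n$ plus lower-order terms. The parallel edges and loops this creates are where the paper's mention of blocks, 2-connectivity and cycles through two given vertices presumably enters.

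The delicate point is transferring minimality. I need that a decomposition of this multigraph into fewer cycles lifts back to a decomposition of $F$ into fewer cycles, contradicting the minimality of $\mathcal C$. Lifting could fail when a cycle passes through two copies of the same $w$, producing only a closed trail. Such a trail can be split further, but splitting may increase the count. So I expect to argue block by block: within a 2-connected block, reroute cycles through pairs of vertices to control this. Once that is done, adding the bounds for $\mathcal S_0$ and $\mathcal S_1$ completes the cut condition and hence the lemma.
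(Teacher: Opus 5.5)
\paragraph{Gap.} Your flow network and cut computation match the paper's, and your bound on $\mathcal S_0$ via minimality of subfamilies is correct. The step that carries the lemma, bounding $\mathcal S_1$, is not proved, and your proposed route has two unresolved problems.
\begin{itemize}
\item The $Ak\log n$ bound quoted before the lemma is for simple graphs. Your contracted multigraph on $U$ can carry many parallel virtual edges between the same pair $u,u'$, coming from distinct outside vertices. Such a multigraph can need about half as many cycles as it has parallel edges, so no ``$A|U|\log n$ plus lower-order terms'' bound is available for it.
\item As you note yourself, a multigraph cycle through two copies of the same outside vertex lifts only to a closed trail. Splitting that trail can increase the count, so minimality of $\mathcal C$ does not transfer. ``Argue block by block and reroute'' is a hope, not an argument.
\end{itemize}
You have also set yourself a harder target than the cut condition requires. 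You aim for $|\mathcal S_0|+|\mathcal S_1|\le 2A|U|\log n$, which discards the fact that the deficit counts cycles inside $U$ with weight $2$ but cycles with one outside vertex only with weight $1$.

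\paragraph{Missing idea: use that weighting.} Let $H_1$ be the union of the $q_0$ cycles inside $U$ and the $q_1$ cycles with exactly one vertex outside $U$. These cycles form a minimum cycle decomposition of $H_1$.
\begin{itemize}
\item Greedily delete cycles of $H_1$ that pass through two distinct vertices outside $U$. The degree sum of $H_1$ outside $U$ is $2q_1$, and each deletion lowers it by at least $4$, so at most $q_1/2$ cycles are deleted.
\item The remaining even graph $H_1'$ has no bridges, so all its blocks are $2$-connected. Any two vertices of a $2$-connected block lie on a common cycle, so each block has at most one vertex outside $U$.
\item Building each component block by block then shows that $H_1'$ has at most $2|U|$ nonisolated vertices. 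Hence $H_1'$ decomposes into at most $2A|U|\log n$ cycles.
\item Minimality of the decomposition of $H_1$ gives $q_0+q_1\le q_1/2+2A|U|\log n$, that is, $2q_0+q_1\le 4A|U|\log n\le K|U|$. This is exactly the cut condition.
\end{itemize}
This is the paper's argument. It needs no multigraph, no vertex splitting, and no transfer of minimality through a lifting.
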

\begin{proof}
Fix $U\subseteq V(H)$. Let $q_0$ count the cycles of $\mathcal C$
contained in $U$, and $q_1$ those with exactly one vertex outside
$U$. Let $H_1$ be the union of these cycles. The $q_0+q_1$ chosen cycles
form a minimum cycle decomposition of $H_1$, since a smaller one could replace
them in $\mathcal C$, leaving every cycle outside $H_1$ unchanged.
When $H_1$ has no edges, $q_0=q_1=0$, so the estimate
$2q_0+q_1\le K|U|$ established below holds trivially.

As long as $H_1$ contains a cycle through two distinct vertices
outside $U$, delete such a cycle. The initial sum of the degrees
outside $U$ is $2q_1$, and each deletion reduces this sum by at
least four. Thus at most $q_1/2$ cycles are deleted. The remaining
graph $H_1'$ is even and contains no cycle through two vertices
outside $U$.

Since $H_1'$ is even, every edge of $H_1'$ lies on a cycle, so $H_1'$ has no bridges. Hence every block of $H_1'$ is 2-connected and has at least three vertices. A block containing two vertices outside $U$ has a cycle through
both vertices by its 2-connectivity, contradicting the choice of
$H_1'$. Thus each block has at most one vertex outside $U$,
and hence at least two vertices in $U$. Order the blocks in each component of $H_1'$ so that each new block meets the preceding ones in one cut vertex. The first block has at least two vertices in $U$ and at most one
outside, so it contains no fewer vertices in $U$ than outside.
Every later block shares a single cut vertex with the preceding
union, and all its other vertices are new. If that cut vertex lies
in $U$, the block adds at least one new vertex in $U$ and at most
one outside; if it lies outside $U$, all its new vertices lie in
$U$. Thus each block adds no fewer vertices in $U$ than outside.
 Summing over the components, which have disjoint
vertex sets, shows that $H_1'$ has at most $2|U|$ nonisolated vertices.

Since $n$ is sufficiently large, the preceding cycle bound
decomposes $H_1'$ into at most
$2A|U|\log n$ cycles. Combining these with the deleted cycles,
and using minimality for $H_1$, we obtain
$q_0+q_1\le q_1/2+2A|U|\log n$. Hence
$2q_0+q_1\le4A|U|\log n\le K|U|$.

To make the assignments, apply the integral max-flow min-cut
theorem~\cite[Section~7.2 and Exercise~7.2.2]{Bondy2008}. Construct a directed
network with a source, one vertex for each cycle, one vertex for
each vertex of $H$, and a sink. The arcs from the source to the
cycles have capacity two, the arcs from each cycle to its vertices
have capacity one, and the arcs from graph vertices to the sink
have capacity $K$. If $\mathcal S$ and $U$ are the cycles and graph
vertices on the source side of a cut, its capacity is
\[
2(|\mathcal C|-|\mathcal S|)
+\sum_{C\in\mathcal S}|V(C)\setminus U|+K|U|.
\]
Only cycles with zero or one vertex outside $U$ contribute
positively, and they contribute at most $2q_0+q_1$ in total; hence
$\sum_{C\in\mathcal S}(2-|V(C)\setminus U|)\le2q_0+q_1\le K|U|$.
Subtracting $2|\mathcal C|$ from the cut capacity leaves
$K|U|-\sum_{C\in\mathcal S}(2-|V(C)\setminus U|)\ge0$,
so every cut has capacity at least $2|\mathcal C|$. The total capacity leaving
the source is exactly this value. The integral max-flow min-cut theorem therefore
provides a flow of value $2|\mathcal C|$ that is integer on every
arc. Every source-to-cycle arc is saturated, since the capacities of
these arcs sum to the flow value. At each cycle its two units of flow go to distinct vertices because those arcs
have capacity one. The capacity $K$ at each graph vertex bounds
its number of assignments, as required.
\end{proof}

A \emph{chord} of a cycle $C$ is an edge joining two non-consecutive vertices of $C$.

\begin{lemma}\label{lem:two-paths}
	Each of the following graphs admits a decomposition into two paths whose four endpoints are exactly $a,b,c,d$, each occurring once:
	\smallskip
	
	{\rm (i)} a cycle $C$ together with edges $xa,xb,yc,yd$, where $x\neq y$ lie on $C$ and $a,b,c,d$ are distinct vertices outside $C$;
	
	{\rm (ii)} a cycle $C$ together with two vertex-disjoint chords $ab,cd$.
\end{lemma}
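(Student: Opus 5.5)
Both parts are proved by explicit construction: cut the cycle at the attachment vertices (or at the chord ends) and hand out the arcs.

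\emph{Part (i).} Since $x\ne y$ lie on $C$, removing $x$ and $y$ splits $C$ into two internally disjoint $x$--$y$ paths $Q_1$ and $Q_2$. Each has at least one edge, and their union is $C$. I take
\[
P_1=a\,x\,Q_1\,y\,c,\qquad P_2=b\,x\,Q_2\,y\,d .
\]
Each $P_i$ is a path. Its interior lies on $C$ and is traversed along a single arc, and its endpoints $a,c$ (resp.\ $b,d$) are distinct vertices outside $C$, so no vertex repeats. The edge sets are disjoint and together cover $E(C)\cup\{xa,xb,yc,yd\}$. The four endpoints are $a,b,c,d$, each occurring once.

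\emph{Part (ii).} The chords $ab$ and $cd$ are vertex-disjoint, so $a,b,c,d$ are four distinct vertices of $C$. Up to renaming within each chord and reversing the orientation, their cyclic order is either $a,b,c,d$ (non-crossing) or $a,c,b,d$ (crossing). Write $C[u,v]$ for the arc of $C$ from $u$ to $v$ in the fixed orientation.

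\emph{Crossing case, cyclic order $a,c,b,d$.} I take
\[
P_1=a\,C[a,c]\,c\,d\,C[d,b]\,b \quad\text{reversed suitably},
\]
that is, the arc from $c$ to $a$ backwards, then on to $d$, and so on. This needs care, so I plan to simply search for the right assignment. The four arcs are $C[a,c]$, $C[c,b]$, $C[b,d]$ and $C[d,a]$. Set
\[
P_1=C[c,b]\cup\{ba\}\cup C[d,a]\ \text{reversed}\quad\text{(path } c\to b\to a\to d),
\]
\[
P_2=C[a,c]\cup\{cd\}\cup C[b,d]\ \text{reversed}\quad\text{(path } a\to c\to d\to b).
\]
Then $P_1$ runs $c\to b$ along its arc, across the chord $ba$, and from $a$ back along $C[d,a]$ to $d$. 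Its endpoints are $c$ and $d$, and its three pieces are internally disjoint. Similarly $P_2$ has endpoints $a$ and $b$. All four endpoints are distinct, and the edges are partitioned. The only overlap is at the junction vertices, where each path passes through exactly once.

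\emph{Non-crossing case, cyclic order $a,b,c,d$.} The arcs are $C[a,b]$, $C[b,c]$, $C[c,d]$ and $C[d,a]$. Set
\[
P_1=C[b,c]\cup\{ba\}\cup C[d,a]\ \text{reversed}\quad\text{(path } c\to b\to a\to d).
\]
This is a path with endpoints $c,d$ that avoids the interiors of $C[a,b]$ and $C[c,d]$. Set
\[
P_2=C[a,b]\cup C[c,d]\cup\{cd\}.
\]
Then $P_2$ is not a path: $C[c,d]$ together with $cd$ closes a cycle. So I swap pieces, taking $P_1=C[a,b]\cup\{bc\text{-arc}\}\ldots$. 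Checking all assignments systematically is the step I expect to be the main obstacle. The principle is that each path must use exactly one chord, the chord must join two arcs that are not adjacent through it, and the arcs must link into one chain.

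The concrete attempt for the non-crossing case is:
\[
P_1=d\,C[d,a]\ \text{backwards}\ldots
\]
Specifically, take $P_1$ to be the arc $C[d,a]$ traversed from $a$ to $d$, followed by the chord $dc$, followed by $C[b,c]$ traversed from $c$ to $b$. This is a path from $a$ to $b$ through $d$, $c$ on disjoint arcs. For $P_2$, take $C[a,b]$ with endpoints $a,b$ joined through the chord $ab$ to $C[c,d]$. That closes up at $a,b$ unless it is traversed as $b\to a$ along $C[a,b]$ reversed, then chord $a\to b$, which repeats $b$.

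So the non-crossing case must be handled by exploiting which vertices are endpoints. I would first verify whether the statement intends the crossing configuration, and if not, search small cases, such as the $4$-cycle with $2$ chords, to find the correct pairing before writing the general argument.
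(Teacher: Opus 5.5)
\textbf{There is a genuine gap in part (ii): the non-crossing case is never proved.}

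Your part (i) is correct and is exactly the paper's argument. Your crossing case of (ii) is also correct: with cyclic order $a,c,b,d$, $P_1$ runs $c\to b\to a\to d$ and $P_2$ runs $a\to c\to d\to b$.

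The non-crossing case, with cyclic order $a,b,c,d$, you leave unresolved, and you even suggest the statement might not cover it. It cannot be dropped. In the proof of Theorem~\ref{thm:main}, the two chords of a long cycle are arbitrary vertex-disjoint edges of $G_2[W]$, with no control over their cyclic position, so both configurations occur.

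Your guiding principle, that each chord sits between two arcs, is exactly what fails here. It also explains why every attempt you made broke down:
\begin{itemize}
\item Every interior vertex of an arc must not be an endpoint, so each arc lies entirely in one path.
\item Hence each path is a vertex-simple path in the multigraph on $\{a,b,c,d\}$ with edges $ab$ (twice), $bc$, $cd$ (twice) and $da$.
\item There are six edges and a simple path on four vertices has at most three, so both paths are Hamiltonian paths of the $4$-cycle $abcd$.
\item With endpoint pairs $\{a,b\}$ and $\{c,d\}$, the paths would have to be $b\,c\,d\,a$ and $d\,a\,b\,c$. These use $bc$ and $da$ twice each, but each has multiplicity one.
\end{itemize}
Both of your attempts use this impossible pairing.

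The correct pairing is $\{a,d\}$ and $\{b,c\}$, with each chord placed at the \emph{end} of its path:
\begin{itemize}
\item $P_1$ is $C[a,b]\,C[b,c]$ followed by the chord $cd$, i.e.\ $a\to b\to c\to d$.
\item $P_2$ is $C[c,d]\,C[d,a]$ followed by the chord $ab$, i.e.\ $c\to d\to a\to b$.
\end{itemize}
Each is a path, because $d\notin C[a,b]\cup C[b,c]$ and $b\notin C[c,d]\cup C[d,a]$. Their edge sets partition $E(C)\cup\{ab,cd\}$, and the endpoints are $a,d$ and $c,b$. In both configurations this fits the paper's one-line description, ``two suitably chosen subpaths with the chords.'' With this case added, your proof is complete.
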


\begin{proof}
	For {\rm (i)}, split $C$ into two edge-disjoint $x$--$y$ paths. Concatenate the edge $xa$, one of the two $x$--$y$ paths, and the
	edge $yc$ to form a path with endpoints $a,c$; concatenate the edge
	$xb$, the other $x$--$y$ path, and $yd$ to form a path with endpoints $b,d$. For {\rm (ii)}, the four chord endpoints divide \(C\) into four edge-disjoint subpaths. Concatenating two suitably chosen subpaths with the chords yields two paths whose endpoints are exactly $a,b,c,d$.
\end{proof}

\begin{proof}[Proof of Theorem~\ref{thm:main}]
Let $d=np$ and $D=D(G)$. First suppose that $d\le n^{1/4}$.
Lemma~\ref{lem:random} supplies the degree, common-neighbor $\Delta_2(G)$,
spectral and odd-degree hypotheses of Lemma~\ref{lem:pairing}
with $r=d$ and $c=1/4$, whp. Its other hypotheses hold because
$\log n=o(d)$ and $d^3\le n^{3/4}=o(n)$. Thus
Lemma~\ref{lem:pairing} proves the theorem in this range.

We may therefore assume that $d>n^{1/4}$. Let $r=n^{1/8}$.
We partition the edges of $G$ into two parts $G_1,G_2$ that are
edge-disjoint but not independent. Independently for each unordered
pair of distinct vertices, assign the labels $G_1$, $G_2$, or absent
with probabilities $p-r/n$, $r/n$, and $1-p$, respectively.
These probabilities are nonnegative for large $n$. Consequently,
$G=G_1\dot\cup G_2\sim G(n,p)$,
$G_1\sim G(n,q)$ for $q=p-r/n$, and $G_2\sim G(n,r/n)$.

Expose only $G_1$. Since
$q\ge\tfrac12n^{-3/4}\gg\log^6 n/n$ and $q=o(1)$,
Lemma~\ref{lem:core} supplies, whp, a partition
$G_1=H\dot\cup H_0$ with $H$ even, $\Delta(H_0)=O(\log^2 n)$,
and a decomposition of $H$ into $o(n)$ cycles.
Also, $\Delta(G_1)=o(n)$ whp by Lemma~\ref{lem:random}.
Choose such a partition and a minimum cycle decomposition
$\mathcal C$ of $H$, breaking ties by a deterministic rule
depending only on $G_1$. Let $t=|\mathcal C|=o(n)$.
Choose attachment vertices $x_C,y_C$ by Lemma~\ref{lem:attachments},
again deterministically from $G_1$, and let $A_*$ be their union. Thus $|A_*|\le2t$,
and every vertex of $A_*$ is selected as an attachment vertex
for at most $K=O(\log n)$ cycles. Call a cycle short if its length
is at most $3n/4$, and long otherwise.

\begin{claim}\label{clm:candidates}
For each fixed exposed graph $G_1$ as above with
$\Delta(G_1)\le n/100$ and $t\le n/100$, the conditional probability
that every short cycle $C\in\mathcal C$ and each of its two
attachment vertices $x$ satisfy
\begin{equation}\label{eq:candidates}
 |N_{G_2}(x)\cap(D\setminus(V(C)\cup A_*))|\ge r/100
\end{equation}
is at least $1-10n e^{-r/200}$ for all sufficiently large $n$.
\end{claim}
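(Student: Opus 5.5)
The plan is to work conditionally on the exposed graph $G_1$, union bound over the at most $2t\le n$ pairs $(C,x)$, and show that each fixed pair fails with probability at most $10e^{-r/200}$. Conditional on $G_1$, the pairs not in $G_1$ lie in $G_2$ independently, each with probability $s=(r/n)/(1-q)\ge r/n$. The pairs in $G_1$ are never in $G_2$. The difficulty is that $D=D(G)$ depends on $G_2$ itself: whether $w$ is odd depends on $d_{G_1}(w)+d_{G_2}(w)$. So membership of $w$ in $D$ is correlated both with the edge $xw$ and with edges among the neighbours of $x$. I expect this correlation to be the main obstacle, and I will handle it by exposing $G_2$ in stages and reserving private edges for each candidate vertex.

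\textbf{Step 1: many $G_2$-neighbours of $x$ in a good set.} Fix a short cycle $C$ and one of its attachment vertices $x$. Let
\[
W=V\setminus\bigl(V(C)\cup A_*\cup N_{G_1}(x)\cup\{x\}\bigr).
\]
Using $|V(C)|\le 3n/4$, $|A_*|\le 2t\le n/50$ and $\Delta(G_1)\le n/100$, we get $|W|\ge n/5$ for large $n$. All pairs $xw$ with $w\in W$ are unexposed, so $|N_{G_2}(x)\cap W|\sim\operatorname{Bin}(|W|,s)$, with mean at least $r/5$. The lower-tail Chernoff bound gives $|N_{G_2}(x)\cap W|\ge r/10$ except with probability $e^{-r/40}$. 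Expose only these edges. On this event, fix a subset $N'\subseteq N_{G_2}(x)\cap W$ of size exactly $k=\lceil r/10\rceil$, chosen by a deterministic rule.

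\textbf{Step 2: decoupling the parities.} Next expose all $G_2$-pairs inside $N'\cup\{x\}$, and all $G_2$-pairs at vertices of $N'$ that are not of the form $wz$ with $z\notin N'\cup\{x\}$ and $wz\notin G_1$. For each $w\in N'$, the still-unexposed pairs $wz$, with $z\notin N'\cup\{x\}$ and $wz\notin E(G_1)$, number at least $m\ge n-k-n/100-1\ge n/2$. For different $w$ these pair sets are disjoint, since both endpoints of a pair cannot lie in $N'$. Given everything exposed so far, $d_G(w)$ equals a fixed number plus an independent $\operatorname{Bin}(m_w,s)$ variable. By the parity formula from Section~\ref{sec:prelim}, $w$ is odd with conditional probability
\[
\frac{1\pm(1-2s)^{m_w}}{2}\ge\frac{1-e^{-2sm_w}}{2}\ge\frac{1-e^{-r}}{2}\ge\frac13,
\]
whichever parity the fixed part has. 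These events are independent over $w\in N'$.

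\textbf{Step 3: counting and the union bound.} By Step~2, $|N'\cap D|$ stochastically dominates $\operatorname{Bin}(k,1/3)$, whose mean is at least $r/30$. The lower-tail bound with $\eta=7/10$ gives $|N'\cap D|\ge r/100$ except with probability $e^{-0.245\,r/30}\le e^{-r/123}$. Since $N'\subseteq W$ avoids $V(C)\cup A_*$, this yields~\eqref{eq:candidates} for the pair $(C,x)$. The total failure probability for one pair is at most $e^{-r/40}+e^{-r/123}\le 2e^{-r/200}$. Summing over at most $n$ pairs $(C,x)$ gives failure probability at most $2ne^{-r/200}\le 10ne^{-r/200}$, which proves the claim.
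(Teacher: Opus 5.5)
Your proof is correct and follows essentially the same route as the paper. You condition on $G_1$, use Chernoff to get $\Omega(r)$ $G_2$-neighbours of $x$ off $V(C)\cup A_*$, and expose the pairs at $x$ and inside the neighbourhood so that disjoint private pair sets plus the parity identity make the odd-degree indicators conditionally independent with probability at least $1/3$; Chernoff with deviation $7/10$ and a union bound finish. The only difference is that you pass to a fixed-size subset $N'$ with $|N'|=\lceil r/10\rceil$. This makes $m_w\ge n/2$ automatic and removes the need for the paper's upper-tail estimate $d_{G_2}(x)\le 2r$, giving the slightly better bound $2ne^{-r/200}$.
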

\begin{proof}[Proof of Claim~\ref{clm:candidates}]
Throughout this proof, condition on the fixed graph $G_1$.
Call an unordered pair of distinct vertices \emph{allowed} if
it is not an edge of $G_1$. Each allowed pair belongs to $G_2$
independently with probability
$\rho=(r/n)/(1-p+r/n)=(1+o(1))r/n$ since $E(G_1)
\cap E(G_2)=\emptyset$.

Fix a short cycle $C$ and an attachment vertex $x$ on it.
There are at least $n/4-2t-\Delta(G_1)\ge n/5$ allowed pairs
from $x$ to $V(G)\setminus(V(C)\cup A_*)$.
Thus $Y=|N_{G_2}(x)\setminus(V(C)\cup A_*)|$ is binomial with
conditional mean $\mu\ge r/5$, since $\rho\ge r/n$.
The relative Chernoff bound with deviation $1/2$ gives $\Pr(Y<r/10\mid G_1)
 \le \Pr(Y<\mu/2\mid G_1)
 \le e^{-\mu/8}\le e^{-r/40}.$
The conditional mean $\mu_x$ of $d_{G_2}(x)$ satisfies
$r/2\le(n-1-\Delta(G_1))\rho\le\mu_x\le(n-1)\rho\le4r/3$
for large $n$. Hence $2r\ge3\mu_x/2$, and the relative
Chernoff bound with deviation $1/2$ gives $
 \Pr(d_{G_2}(x)>2r\mid G_1)
 \le\Pr(d_{G_2}(x)>3\mu_x/2\mid G_1)
 \le2e^{-\mu_x/12}\le2e^{-r/24}.$

Expose the indicators of all allowed pairs incident with $x$,
determining $N=N_{G_2}(x)$. Next expose the indicators of all
allowed pairs with both ends in $N$. Let $\mathcal F$ be the information generated by $G_1$ and these
two exposures.
The other allowed pairs remain independent with probability $\rho$.
Restrict for now to exposures satisfying $Y\ge r/10$ and $|N|\le2r$.
For each $u\in N\setminus(V(C)\cup A_*)$, the unknown part of
its degree consists of allowed edges of $G_2$ from $u$ to
$V(G)\setminus(N\cup\{x\})$. Their number of possible pairs is
$m_u\ge n-|N|-1-\Delta(G_1)\ge n/2$.
For distinct vertices $u,u'\in N$, these collections are disjoint, since the pair $uu'$ was already exposed. Hence, conditional on $\mathcal F$,
the remaining degree contributions are independent variables
with distributions $\operatorname{Bin}(m_u,\rho)$.

The exposed part of each degree has a fixed parity, so the
binomial parity identity gives $|\Pr(u\in D\mid\mathcal F)-1/2|=(1-2\rho)^{m_u}/2\le e^{-2\rho m_u}/2\le e^{-r}/2$, since $\rho<1/2$ for large $n$. In particular, these conditional
probabilities are at least $1/3$, and the indicators of membership
in $D$ are conditionally independent.
Since $|N_{G_2}(x)\setminus(V(C)\cup A_*)|=Y\ge r/10$,
the variable $Q=|N_{G_2}(x)\cap(D\setminus(V(C)\cup A_*))|$
has conditional mean $\nu\ge r/30$. The relative Chernoff bound
with deviation $7/10$ therefore gives
\[
 \Pr(Q<r/100\mid\mathcal F)
 \le\Pr(Q<3\nu/10\mid\mathcal F)
 \le2e^{-49\nu/300}\le2e^{-r/200}.
\]
Let $B$ be the good exposure $\{Y\ge r/10,\ |N|\le2r\}$. By the
law of total probability,
$\Pr(Q<r/100\mid G_1)=\Pr(Q<r/100\mid B,G_1)\Pr(B\mid G_1)+\Pr(Q<r/100\mid\bar B,G_1)\Pr(\bar B\mid G_1)\le2e^{-r/200}+3e^{-r/200}=5e^{-r/200}$, where in the second term $\Pr(Q<r/100\mid\bar B,G_1)\le1$.
There are at most $2t\le2n$ pairs $(C,x)$, where $C$ is short
and $x\in\{x_C,y_C\}$. The union bound now gives $
 \Pr\!\left(\exists\, (C,x):
 |N_{G_2}(x)\cap(D\setminus(V(C)\cup A_*))|<r/100
 \,\middle|\,G_1\right)
 \le10n e^{-r/200}=o(1).$
The constants do not depend on the exposed graph, and the local
exposures are used separately for each pair $(C,x)$, so the union
bound needs no independence between their failure events. Note also
that $D$ was never conditioned on.
\end{proof}

The conditions on $G_1$ in Claim~\ref{clm:candidates} hold whp,
so averaging its conditional estimate proves that
\eqref{eq:candidates} holds simultaneously for all the stated
pairs $(C,x)$ whp. Applying Lemma~\ref{lem:random} to $G$ and $G_2$
also gives, whp, $|D|=(1/2+o(1))n$,
$d_{G_2}(v)=(1+o(1))r$ uniformly over $v$,
$\Delta_2(G_2)=O(\log n)$, and
\begin{equation}\label{eq:reserve-spectrum}
 \beta:=\left\|A_{G_2}-\frac rn\one\one^{\mathsf T}\right\|
 =O\bigl(\sqrt{r\log n}\bigr).
\end{equation}
All these properties hold simultaneously whp
by the union bound. Fix an outcome on which they hold. The rest
of the proof is deterministic.

For each attachment vertex of a short cycle, we need two different
endpoints outside that cycle. Define a bipartite graph with right part
$R=D\setminus A_*$ and left part
\[
 L=\{(C,x,i): C\in\mathcal C\text{ is short},\quad
                 x\in\{x_C,y_C\},\ i\in\{1,2\}\}.
\]
A vertex $(C,x,i)\in L$ represents one endpoint to be chosen at $x$
for the cycle $C$. Join it to $u\in R$ precisely when
$u\in N_{G_2}(x)\setminus V(C)$. There are at most $2K$
vertices of $L$ with a given second coordinate $x$, and $|L|\le4t$.
To apply Hall's theorem, suppose that a nonempty subset
$L'\subseteq L$ satisfies $|N(L')|<|L'|$, where $U=N(L')\subseteq R$ is its neighbor set. Let $S\subseteq A_*$
be the set of second coordinates of $L'$. Then
$1\le|U|<|L'|\le\min\{2K|S|,4t\}$.
For each $x\in S$, some $(C,x,i)\in L'$ has at least $r/100$
neighbors by~\eqref{eq:candidates}, all belonging to $U$.
On the other hand, \eqref{eq:reserve-spectrum}, the operator norm
bound and Cauchy--Schwarz imply
\[
 \begin{aligned}
 \frac r{100}|S|
 &\le\sum_{x\in S}d_{G_2}(x,U)
   =\one_S^{\mathsf T}A_{G_2}\one_U\\
 &\le\frac rn|S||U|+\beta\sqrt{|S||U|}
 \le\left(\frac{4rt}{n}+\beta\sqrt{2K}\right)|S|
 =o(r)|S|.
 \end{aligned}
\]
The last equality uses $t=o(n)$, $K=O(\log n)$ and
$\beta\sqrt{2K}/r=O(\log n/n^{1/16})=o(1)$ by \eqref{eq:reserve-spectrum}.
For large $n$ this contradicts $S\ne\varnothing$.
Hall's theorem~\cite[Theorem~16.4]{Bondy2008} therefore gives a
matching covering $L$ and hence distinct choices in $R$ for all
these endpoints. If $L$ is empty, the empty matching suffices.

For a short cycle with attachment vertices $x,y$, let $a,b$ be the two endpoints chosen at $x$ and $c,d$ the two chosen at $y$. The edges $xa,xb,yc,yd$ will be called the
\emph{attachment edges} of this cycle. Lemma~\ref{lem:two-paths}
decomposes the cycle together with these edges into two paths
whose endpoints are $a,b,c,d$. All selected attachment edges are distinct, since no vertex outside
$A_*$ is matched more than once.

Process the long cycles in any fixed order. For each such cycle $C$,
$|D\cap V(C)|\ge|D|+|V(C)|-n>n/12$,
since $|D|\ge n/3$ and $|V(C)|>3n/4$ for large $n$.
To find the two chords of a long cycle $C$, exclude from
$D\cap V(C)$ all vertices already used. These are the attachment
vertices of $A_*$ ($|A_*|\le2t$) and the endpoints of the paths
constructed so far, namely the $4$ endpoints per short cycle of the
attachment edges and the $4$ endpoints per earlier long cycle of its two
chords. Together there are at most $6t$ such vertices.
Thus the remaining set $W\subseteq D\cap V(C)$ satisfies
$|W|\ge n/12-6t\ge n/24$ for large $n$.
By~\eqref{eq:reserve-spectrum}, twice the number of edges of
$G_2[W]$ satisfies
\[
 2e_{G_2}(W)=\one_W^{\mathsf T}A_{G_2}\one_W
 \ge\frac rn|W|^2-\beta|W|>0,
\]
because $(r/n)|W|-\beta\ge r/24-\beta>0$ for large $n$.
Choose one of these edges. Removing its two ends leaves at least
$n/25$ vertices for large $n$, and the same estimate is positive
on the remaining set because $r/25-\beta>0$. We may therefore
choose a second edge with neither endpoint on the first.
Both selected edges are chords of $C$, since their ends lie in
$V(C)$ and they cannot be edges of $C$; indeed, every edge of $C$
belongs to $H\subseteq G_1$, while $G_1$ and $G_2$ are
edge-disjoint. The two selected edges were not used for a short cycle, whose
attachment edges meet $A_*$, or for an earlier long cycle, whose
chord endpoints have been excluded. Lemma~\ref{lem:two-paths} now replaces $C$ and
these two chords by two paths with four new endpoints.

Let $H_1\subseteq G_2$ consist of all the selected attachment edges
and chords, and let $Z$ be the set of endpoints of the $2t$ paths
constructed from the cycles. We have decomposed $H\cup H_1$ into
these paths, whose $4t$ endpoints are distinct, and
$Z\subseteq D\setminus A_*$.
Each use of a vertex of $A_*$ for a short cycle contributes two
edges to $H_1$, and there are at most $K$ such uses at each vertex.
Every vertex of $Z$ has degree one in $H_1$, and every vertex
outside $A_*\cup Z$ has degree zero in $H_1$. Consequently,
$\Delta(H_1)\le2K$ and $D(H_1)=Z$. These two conclusions, together
with the decomposition and $|Z|=4t$, also hold when $t=0$, by
taking $H_1$ to have no edges and $Z=\varnothing$.

The unused edges form the graph $H_2=G-(H\cup H_1)=(G_2-H_1)\dot\cup H_0$ by the
partitions $G=G_1\dot\cup G_2$ and $G_1=H\dot\cup H_0$.
Deleting $H$ preserves all degree parities because $H$ is even,
and deleting $H_1$ changes parity exactly at $Z\subseteq D$.
Therefore,
\begin{equation}\label{eq:remaining-odd}
 D(H_2)=D\setminus Z,\qquad
 |D(H_2)|=|D|-4t=(1/2+o(1))n.
\end{equation}
We check all the other hypotheses of Lemma~\ref{lem:pairing}.
The change in any degree from $G_2$ to $H_2$ is at most
$\Delta(H_1)+\Delta(H_0)=O(\log^2 n)=o(r)$, so
$d_{H_2}(v)=(1+o(1))r$ uniformly over $v$.
Deleting edges does not increase common-neighbor counts. Each new
common neighbor of distinct vertices $u,v$ created by adding
$H_0$ uses an edge of $H_0$ at $u$ or $v$. Hence
$\Delta_2(H_2)\le\Delta_2(G_2)+2\Delta(H_0)=O(\log^2 n)=o(r)$.
Also, $\|A_{H_1}\|\le\Delta(H_1)$ and
$\|A_{H_0}\|\le\Delta(H_0)$ imply
\[
 \left\|A_{H_2}-\frac rn\one\one^{\mathsf T}\right\|
 \le\beta+\Delta(H_1)+\Delta(H_0)=o(r).
\]
Lemma~\ref{lem:normalize} gives $\lambda(H_2)=o(1)$.
Finally, $\log n=o(r)$, $r^3=n^{3/8}=o(n)$, and
\eqref{eq:remaining-odd} gives $|D(H_2)|\ge n/4$ for large $n$.
Thus Lemma~\ref{lem:pairing}, with $c=1/4$, decomposes $H_2$
into exactly $|D(H_2)|/2$ paths.

These paths and the $2t$ paths covering $H\cup H_1$ have disjoint
edge sets and together cover $G$. By~\eqref{eq:remaining-odd},
their total number is
$2t+|D(H_2)|/2=2t+(|D|-4t)/2=\odd(G)/2$.
All the required properties of $G$ hold simultaneously with
probability $1-o(1)$, and the deterministic
Lemma~\ref{lem:pairing} applies on each such outcome.
\end{proof}

\section*{AI Declaration}

ChatGPT was utilized to assist the authors in exploring proof strategies and refining the presentation of this manuscript. All mathematical arguments and proofs in the final manuscript were constructed and verified by the authors, who take full responsibility for all results.

\section*{Acknowledgements}

This research was supported by the National Natural Science Foundation of China (NSFC) grant 12271170 and Science and Technology Commission of Shanghai Municipality grant 22DZ2229014.

\section*{Competing Interests}
The authors declare that there are no competing interests associated with this manuscript.

\section*{Data Availability Statement}
This work is purely theoretical and no experimental datasets were generated or analysed in this research. Data sharing is not applicable to this article.

\end{document}